\journal{...}
\newtheorem{theorem}{Theorem}[section]
\newtheorem{prop}{Proposition}[section]
\newtheorem{remark}{Remark}[section]
\newtheorem{comment}{Comment}
\newtheorem{exam}{Example}[section]
\newtheorem{problem}{Problem}[section]
\numberwithin{equation}{section}
\theoremstyle{definition}
\begin{document}

	\begin{frontmatter}

		\title{Poisson approximation to the binomial distribution: extensions to the convergence  of positive  operators }
			

\author[1]{Ana-Maria Acu${}^*$}
\author[2]{Margareta Heilmann}
\author[3]{Ioan Rasa}
\author[4]{Andra Seserman}
\address[1]{Lucian Blaga University of Sibiu, Department of Mathematics and Informatics, Str. Dr. I. Ratiu, No.5-7, RO-550012  Sibiu, Romania, e-mail: anamaria.acu@ulbsibiu.ro, telephone number: +040766258465}
\address[2]{School of Mathematics and Natural Sciences,
	University of Wuppertal,
	Gau{\ss}stra{\ss}e 20,
	D-42119 Wuppertal, Germany, 	e-mail: heilmann@math.uni-wuppertal.de}
\address[3]{Technical University of Cluj-Napoca, Faculty of Automation and Computer Science, Department of Mathematics, Str. Memorandumului nr. 28, 400114 Cluj-Napoca, Romania
	e-mail:   ioan.rasa@math.utcluj.ro }
\address[4]{Technical University of Cluj-Napoca, Faculty of Automation and Computer Science, Department of Mathematics, Str. Memorandumului nr. 28, 400114 Cluj-Napoca, Romania
	e-mail: campan$\underline{\,\,\,}$aandra@yahoo.com}

		\begin{abstract} The idea behind Poisson approximation to the binomial distribution was used in [J. de la Cal, F. Luquin, J.  Approx. Theory,
			68(3), 1992, 322-329] and subsequent papers in order to establish the convergence of suitable sequences of positive linear operators. The proofs in these papers are given using probabilistic methods. We use similar methods, but in analytic terms. In this way we recover some known results and establish several new ones. In particular, we enlarge the list of the limit operators and give  characterizations of them.
		
		\end{abstract}

		\begin{keyword} Poisson approximation to binomial distribution; positive linear operators; convergence; limit operator.
 
			\MSC[2010]  41A36, 60E05, 60E10.
		\end{keyword}
		
	\end{frontmatter}
\section{Introduction} 

Let $X_n(p_n)$ be a binomial random variable with parameters $n\in {\mathbb N}$ and $p_n\in[0,1]$. Its characteristic function is 
\begin{align*} g_n(s)&=E\left(\exp (is X_n)\right)=\displaystyle\sum_{k=0}^n{n\choose k}p_n^k (1-p_n)^{n-k}\exp(isk)\\
	&=(1-p_n+p_ne^{is})^n, s\in{\mathbb R},  
	\end{align*}
where $E$ stands for mathematical expectation and  $i$ is the imaginary unit.

Let $X(\lambda)$ be a Poisson variable with parameter $\lambda\geq 0$, having the characteristic function 
$$ h(s)=\displaystyle\sum_{k=0}^{\infty} e^{-\lambda}\dfrac{\lambda^k}{k!}\exp(isk)=e^{\lambda(e^{is}-1)},\, s\in {\mathbb R}.  $$
Suppose that $\displaystyle\lim_{n\to\infty}np_n=\lambda$. Then $\displaystyle\lim_{n\to\infty} g_n(s)=h(s)$, $s\in {\mathbb R}$, and hence $X_n(p_n)$ converges in law to $X(\lambda)$. This is the celebrated  result about "Poisson approximation to the binomial distribution", see, e.g., \cite{Feller}, \cite{Shiryayev}. It implies
\begin{equation}\label{e*1}
	\displaystyle\lim_{n\to\infty} E\varphi\left(X_n(p_n)\right)=E\varphi\left(X(\lambda)\right),\,\, \varphi\in C_b[0,\infty),
\end{equation}
where $C_b[0,\infty)$ is the space of all real valued, continuous and bounded functions on $[0,\infty)$.

Now let $x\geq 0$, $m\in {\mathbb N}$, $p_n=\dfrac{x}{n}$, $\lambda=mx$. From  (\ref{e*1}) we deduce
$$ \displaystyle\lim_{n\to\infty}E\varphi\left(X_{mn}\left(\dfrac{x}{n}\right)\right)=E\varphi\left(X(mx)\right),\,\, \varphi\in C_b[0,\infty). $$
This can be written as
\begin{equation}
	\label{e**1} 
	\displaystyle\lim_{n\to\infty}\displaystyle\sum_{k=0}^{mn}{mn\choose k}\left(\dfrac{x}{n}\right)^k\left(1-\dfrac{x}{n}\right)^{mn-k}\varphi\left(k\right)=\displaystyle\sum_{k=0}^{\infty} e^{-mx}\dfrac{(mx)^k}{k!}\varphi\left(k\right).
\end{equation}
Let $f\in C_b[0,\infty)$ and $\varphi\in C_b[0,\infty)$, $\varphi(t)=f\left(\dfrac{t}{m}\right)$, $t\geq 0$. Now (\ref{e**1}) becomes
\begin{equation*}
	\displaystyle\lim_{n\to\infty}\displaystyle\sum_{k=0}^{mn}{mn\choose k}\left(\dfrac{x}{n}\right)^k\left(1-\dfrac{x}{n}\right)^{mn-k}f\left(\dfrac{k}{m}\right)=\displaystyle\sum_{k=0}^{\infty} e^{-mx}\dfrac{(mx)^k}{k!}f\left(\dfrac{k}{m}\right).
\end{equation*}

In terms of Bernstein and Sz\'asz-Mirakyan operators this means that
\begin{equation}\label{e***1}
	\displaystyle\lim_{n\to\infty}B_{mn}\left(f(nt);\dfrac{x}{n}\right)=B_m^{[0]}\left(f(t);x\right),\,\, f\in C_b[0,\infty).
\end{equation}
This result and  similar ones involving other sequences of operators, were obtained in \cite{Adell, Adell1, Adell2, Cal_L, Cal1, Cal2} using probabilistic methods and making references to several probability distributions. Rates of convergence in relations like (\ref{e***1}) can be found in the same papers and the references therein. 

In our paper we present a general result extending (\ref{e***1}), using analytic terms and analytic methods. We apply it to many families of positive linear operators recovering some known results and establishing new ones. 
 
 In (\ref{e***1}) a specific modification of the Bernstein operators, inspired by  Poisson approximation to the binomial distribution, is presented. We will consider several other modifications corresponding to suitable families of operators. As limit operator the Sz\'asz-Mirakjan operator appears in (\ref{e***1}). It will have the same role   in several other examples. But other operators will also appear as limit operators and we will characterize some of them.
 
 Let $L_n$ and $L$ be  positive linear operators associated with random variables. In Section \ref{s2} we show that the family of the complex exponentials $t\in {\mathbb R}\to e^{ist}$, indexed by $s\in {\mathbb R}$, can be used as test family for the convergence of sequences $L_n$ toward  the operator $L$. This is presented in Theorem \ref{T2.1} which is the main result of our paper. We will apply Theorem \ref{T2.1} to several classical or new sequences of positive linear operators. Sections \ref{s3}-\ref{s9} are devoted to sequences for which the limit operator is the classical Sz\'asz-Mirakjan operator. The Gamma operator and the Weierstrass operator appear as the limit operator in Sections \ref{s10}-\ref{s11}. Other limit operators are considered in Section \ref{s12}. This section contains several characterizations of the limit operators.
 
 The necessary definitions and notations will be presented in the next sections.

\section{The complex exponentials as test functions for convergence of operators}\label{s2} Let $I\subseteq {\mathbb R}$ be an interval and $C_b(I)$ the space of all real-valued continuous and bounded functions on $I$. Let $Z^x, Z_1^x, Z_2^x,\dots$ be $I$-valued random variables whose probability distributions depend on a real parameter $x\in I$. Suppose that for each $f\in C_b(I)$ the functions $x\to Ef(Z^x)$ and $x\to Ef(Z_n^x),\, x\in I,\, n\geq 1$, are continuous on $I$. 

Consider the positive linear operators
$$ L_n:C_b(I)\to C_b(I),\,\,n\geq 1, \textrm{ and } L:C_b(I)\to C_b(I), $$
defined for $f\in C_b(I)$ and $x\in I$ by
\begin{equation}
	\label{e2.1p} L_n(f(t);x):=Ef(Z_n^x),\,\, L(f(t);x):=Ef(Z^x).
\end{equation}

 For each $s\in {\mathbb R}$ we consider the function $t\to e^{ist},\, t\in I$, and define 
$$ L_n(e^{ist};x)=L_n(\cos(st);x)+iL_n(\sin(st); x)  $$
and similarly for $L(e^{ist};x)$.
\begin{theorem}\label{T2.1}
	Suppose that for each $s\in {\mathbb R}$ and $x\in I$,
	\begin{equation}\label{e2.2p}
		\displaystyle\lim_{n\to\infty} L_n(e^{ist};x)=L(e^{ist};x),
	\end{equation}
and for each $x\in I$ the function $s\to L(e^{ist};x)$ is continuous on ${\mathbb R}$.
	Then
	\begin{equation}\label{e2.3p}
		\displaystyle\lim_{n\to\infty} L_n(f(t);x)=L(f(t);x)
	\end{equation}
for all $f\in C_b(I)$ and $x\in I$.
\end{theorem}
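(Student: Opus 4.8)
The assertion is precisely the operator form of L\'evy's continuity theorem, so the plan is to reproduce a proof of that theorem while staying within analysis. I would fix $x\in I$ throughout — no uniformity in $x$ is claimed, and the assumed continuity of $x\mapsto Ef(Z^x)$ is needed only to make the framework \eqref{e2.1p} meaningful, not for the proof — and rewrite everything in terms of the Borel probability measures $\mu_n$ and $\mu$ on $I$ that are the laws of $Z_n^x$ and $Z^x$. Then $L_n(f;x)=\int_I f\,d\mu_n$ and $L(f;x)=\int_I f\,d\mu$; the functions $\phi_n(s):=L_n(e^{ist};x)=\int_I e^{ist}\,d\mu_n(t)$ and $\phi(s):=L(e^{ist};x)$ are the characteristic functions of $\mu_n$ and $\mu$; hypothesis \eqref{e2.2p} is $\phi_n(s)\to\phi(s)$ for every $s\in\mathbb{R}$; and $\phi$ is continuous on $\mathbb{R}$ with $\phi(0)=1$ (a property automatic for any characteristic function, but we only use that it is available). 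The conclusion \eqref{e2.3p} is exactly $\int_I f\,d\mu_n\to\int_I f\,d\mu$ for all $f\in C_b(I)$, i.e. the weak convergence $\mu_n\Rightarrow\mu$.

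The step I expect to be the real obstacle is tightness of $(\mu_n)$. The key is the elementary estimate
\begin{equation*}
	\mu_n\!\left(\left\{t:\ |t|\ge\tfrac{2}{\delta}\right\}\right)\le\frac{1}{\delta}\int_{-\delta}^{\delta}\bigl(1-\operatorname{Re}\phi_n(s)\bigr)\,ds,\qquad \delta>0,
\end{equation*}
obtained from $\int_{-\delta}^{\delta}e^{ist}\,ds=\tfrac{2}{t}\sin(\delta t)$ by Fubini together with the bounds $1-\tfrac{\sin u}{u}\ge0$ for all $u$ and $\ge\tfrac12$ for $|u|\ge2$. Given $\varepsilon>0$, continuity of $\phi$ at $0$ (with $\phi(0)=1$) lets me pick $\delta$ with $\tfrac1\delta\int_{-\delta}^{\delta}(1-\operatorname{Re}\phi)\,ds<\varepsilon/2$; since $|1-\phi_n|\le2$ on the bounded interval $[-\delta,\delta]$ and $\phi_n\to\phi$ pointwise, dominated convergence makes the right-hand side above $<\varepsilon$ for all large $n$; the finitely many remaining $\mu_n$, and $\mu$ itself, are individually tight, so after enlarging the threshold one obtains a compact $K\subset I$ with $\mu_n(I\setminus K)<\varepsilon$ for all $n$ and $\mu(I\setminus K)<\varepsilon$.

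Given tightness, the remainder is routine and can be carried out in two ways. (i) By Helly's selection theorem every subsequence of $(\mu_n)$ has a sub-subsequence converging weakly to a probability measure $\nu$ on $\overline{I}$, whose characteristic function is $\lim\phi_{n_k}=\phi$, so $\nu=\mu$ by the uniqueness theorem for characteristic functions; hence $\mu_n\Rightarrow\mu$ along the whole sequence and \eqref{e2.3p} follows. (ii) Alternatively, a direct analytic estimate: extend a given $f\in C_b(I)$ to $\widetilde f\in C_b(\mathbb{R})$ with the same sup-norm $M$ (legitimate since the $\mu_n$, $\mu$ are supported in $I$); using tightness, choose a compact interval $J=[-A,A]$ and a cut-off $\chi$ ($0\le\chi\le1$, $\chi\equiv1$ on $[-A/2,A/2]$, $\operatorname{supp}\chi\subset J$) with $\mu_n(\{|t|>A/2\})$ and $\mu(\{|t|>A/2\})$ all below a prescribed $\eta$; then produce trigonometric polynomials $P_N=\sum_k c_k^{(N)}e^{is_kt}$ by convolving $\widetilde f\chi$ — viewed as a continuous function on the torus $\mathbb{R}/2A\mathbb{Z}$, which it is since $\widetilde f\chi$ vanishes at $\pm A$ — with a Fej\'er kernel, so that $P_N\to\widetilde f\chi$ uniformly while $\|P_N\|_\infty\le\|\widetilde f\chi\|_\infty\le M$ for every $N$. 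The uniform bound $\|P_N\|_\infty\le M$ is exactly what breaks the otherwise circular dependence between the size of the approximating polynomial and the tail estimate. Now split $\int\widetilde f\,d\mu_n-\int\widetilde f\,d\mu$ into the contribution of $1-\chi$ (bounded by $M\mu_n(\{|t|>A/2\})$ and $M\mu(\{|t|>A/2\})$), the contribution of $\widetilde f\chi-P_N$ on $J$ (bounded by the uniform approximation error) and off $J$ (bounded by $M\mu_n(J^c)$ and $M\mu(J^c)$, using $\|P_N\|_\infty\le M$), and the term $\int P_N\,d\mu_n-\int P_N\,d\mu=\sum_k c_k^{(N)}\bigl(\phi_n(s_k)-\phi(s_k)\bigr)$, which tends to $0$ for fixed $N$ by linearity and \eqref{e2.2p}; letting $n\to\infty$, then shrinking the uniform error and $\eta$, gives \eqref{e2.3p}.
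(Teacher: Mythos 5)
Your proof is correct, but it takes a genuinely different route from the paper's. The paper's proof is a two-line reduction: it observes that $L_n(e^{ist};x)$ and $L(e^{ist};x)$ are the characteristic functions of $Z_n^x$ and $Z^x$, invokes L\'evy's continuity theorem (cited from Feller and Shiryayev) to get convergence in law, and then uses the standard fact that convergence in law yields $Ef(Z_n^x)\to Ef(Z^x)$ for bounded continuous $f$. You instead \emph{reprove} the continuity theorem: the tightness estimate $\mu_n(\{|t|\ge 2/\delta\})\le\frac1\delta\int_{-\delta}^{\delta}(1-\operatorname{Re}\phi_n)\,ds$ combined with continuity of $\phi$ at $0$ and dominated convergence is exactly the standard engine, and both of your closing arguments (Helly plus uniqueness, or the Fej\'er/trigonometric-polynomial splitting with the crucial uniform bound $\|P_N\|_\infty\le M$) are sound; your remark that the continuity of $x\mapsto Ef(Z^x)$ plays no role in the fixed-$x$ statement is also accurate. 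What the paper's approach buys is brevity and a clean separation between the operator-theoretic reformulation and the probabilistic input; what yours buys is self-containedness and a fully analytic argument, which is arguably closer to the paper's stated aim of recasting the probabilistic proofs ``in analytic terms,'' since the paper still leans on the probabilistic theorem as a black box. The only caveat worth flagging (it affects the paper's proof equally and is harmless in all the applications, where $I$ is closed) is that for non-closed $I$ a function in $C_b(I)$ need not extend to $C_b(\mathbb{R})$, so the final passage from weak convergence of laws on $\mathbb{R}$ to \eqref{e2.3p} for all $f\in C_b(I)$ implicitly uses that $I$ is closed.
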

\begin{proof}
	Let $g_n^x(s)$ and $g^x(s)$ be the characteristic functions of $Z_n^x$ and $Z^x$, $n\geq 1$, $x\in I$. Then, according to (\ref{e2.1p})
	\begin{align*}
		& g_n^x(s):=E(e^{isZ_n^x})=L_n(e^{ist};x),\\
	&	g^x(s):=E(e^{isZ^x})=L(e^{ist};x).
	\end{align*}
Now (\ref{e2.2p}) shows that
$$ \displaystyle\lim_{n\to\infty}g_n^x(s)=g^x,\,\, s\in {\mathbb R},  $$
and by hypothesis $g^x$ is continuous on ${\mathbb R}$.

Consequently we can apply the continuity theorem of L\'evy (see \cite{Feller}, \cite{Shiryayev}) to conclude that the sequence $(Z_n^x)_{n\geq 1}$ convergences in law to $Z^x$. Furthermore, this implies (see \cite{Feller}, \cite{Shiryayev})
\begin{equation}
	\label{e2.4p} \displaystyle \lim_{n\to\infty} Ef(Z_n^x)=Ef(Z^x),
\end{equation}
and now (\ref{e2.3p}) is a consequence of (\ref{e2.1p}) and (\ref{e2.4p}).
\end{proof}

In the next sections  we will be concerned with positive linear operators $L_n$ and $L$ for which it is easy to identify the corresponding random variables. See also \cite{Adell, Adell1, Adell2, Cal_L, Cal1, Cal2}.

\section{The Baskakov type operators ${ B}_{n}^{[c]}$}\label{s3}
Let $c \in \mathbb{R}$, $n \in \mathbb{R}$, $n > c$ for $c\geq0$ and $-n/c \in \mathbb{N}$ for $c<0$. Furthermore let  $I_c = [0,\infty)$ for $c\geq0$ and $I_c=[0,-1/c]$ for $c < 0$. Consider $f:I_c \longrightarrow \mathbb{R}$  given in such a way that the corresponding integrals and series are convergent.

The Baskakov-type operators are defined as follows (see \cite{Bas_VB, RIMA_RaHe, India})
$$
{ B}_{n}^{[c]} (f;x) = \sum_{j=0}^{\infty} p_{n,j}^{[c]}(x) f \left ( \frac{j}{n} \right ) ,
$$
where 
\begin{equation}
	\label{eq1}
	p_{n,j}^{[c]}(x)=
	\left \{ \begin{array}{cl}
		\displaystyle \frac{n^j}{j!} x^j  e^{-nx} &, \, c = 0 ,\\
		\displaystyle  \frac{n^{c,\overline{j}}}{j!} x^j (1+cx)^{-\left ( \frac{n}{c}+j\right)} &, \, c \not= 0,
	\end{array} \right .
\end{equation}
and $
a^{c,\overline{j}} := \prod_{l=0}^{j-1}  (a+cl),\quad 
a^{c,\overline{0}} :=1.
$ 

For $c=-1$ one recovers the well known Bernstein operators
\begin{equation*}
	B_n^{[-1]}(f;x):=\sum^{n}_{j=0}f\left(\frac{j}{n}\right)p_{n,j}^{[-1]}(x),\textrm{ where } p_{n,j}^{[-1]}(x):=\binom{n}{j}x^j(1-x)^{n-j},\,\, x\in [0,1].
\end{equation*}

 The classical Baskakov operators are obtained for $c=1$ and are defined as follows
\begin{equation*}
	B_n^{[1]}(f;x):=\sum^{\infty}_{j=0}f\left(\frac{j}{n}\right)p_{n,j}^{[1]}(x),\textrm{ where } p_{n,j}^{[1]}(x):=\binom{n+j-1}{j}\frac{x^j}{(1+x)^{n+j}},\,\, x\in [0,\infty).
\end{equation*}

The classical
Sz\'{a}sz-Mirakjan operators are Baskakov type operators with $c=0$ defined by 
\begin{equation}\label{smeq1}
	B_n^{[0]}(f;x):=\sum^{\infty}_{j=0}p_{n,j}^{[0]}(x)f\left(\frac{j}{n}\right), \textrm{ where } p_{n,j}^{[0]}(x)=e^{-nx}\frac{(nx)^j}{j!},\,\, x\in [0,\infty).
\end{equation}


\begin{theorem}
	Let $m\in {\mathbb N}$, $c\in {\mathbb R}$ and $x\in I_c$ be given. Then
\begin{equation}\label{e3.1} \displaystyle\lim_{n\to\infty} { B}_{mn}^{[c]}\left(f(nt);\dfrac{x}{n}\right)= B_{m}^{[0]}(f(t);x),\,\, f\in C_b(I_c).  \end{equation}
\end{theorem}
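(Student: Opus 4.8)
The plan is to obtain (\ref{e3.1}) as an application of Theorem \ref{T2.1}. Set
\begin{equation*}
	L_n(f(t);x):=B_{mn}^{[c]}\!\left(f(nt);\frac{x}{n}\right),\qquad L(f(t);x):=B_m^{[0]}(f(t);x).
\end{equation*}
Expanding the definition of $B_{mn}^{[c]}$ and using that $f(nt)$ evaluated at $t=j/(mn)$ equals $f(j/m)$, one gets
\begin{equation*}
	L_n(f(t);x)=\sum_{j=0}^{\infty}p_{mn,j}^{[c]}\!\left(\frac{x}{n}\right)f\!\left(\frac{j}{m}\right),
\end{equation*}
so $L_n$ is the operator (\ref{e2.1p}) attached to the random variable $Z_n^x=Y_n^x/m$, where $Y_n^x$ takes the value $j$ with probability $p_{mn,j}^{[c]}(x/n)$, while $L=B_m^{[0]}$ is attached to $Z^x=Y^x/m$ with $Y^x$ a Poisson variable of parameter $mx$. (For $c<0$ these operators are read on $C_b[0,\infty)$; only the values of $f$ at the lattice points $j/m$ intervene, and the probabilities $p_{mn,j}^{[c]}(x/n)$ still sum to $1$ since $x/n\in I_c$.) The continuity in $x$ required in Section \ref{s2} is the standard continuity of these operators in their parameter. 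It therefore suffices to verify the two hypotheses of Theorem \ref{T2.1}: that $L_n(e^{ist};x)\to L(e^{ist};x)$ for every $s\in\mathbb{R}$ and $x\in I_c$, and that $s\mapsto L(e^{ist};x)$ is continuous.

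The computation of these characteristic functions is the heart of the argument. For $c\ne0$ I would use the elementary generating identity for the Baskakov basis,
\begin{equation*}
	\sum_{j=0}^{\infty}p_{N,j}^{[c]}(y)\,z^j=\bigl(1+cy(1-z)\bigr)^{-N/c},
\end{equation*}
which one proves by writing $\dfrac{N^{c,\overline j}}{j!}=c^{j}\dbinom{N/c+j-1}{j}$ and summing by the binomial theorem $\sum_j\binom{\alpha+j-1}{j}u^j=(1-u)^{-\alpha}$ with $u=\dfrac{cyz}{1+cy}$: for $c>0$ one has $|u|\le\dfrac{cy}{1+cy}<1$ (the case $y=0$ being trivial), so the series — absolutely convergent since $\sum_jp_{N,j}^{[c]}(y)=1$ — may be evaluated this way; for $c<0$ the sum is finite, $p_{N,j}^{[c]}\equiv0$ for $j>-N/c\in\mathbb{N}$, and the identity is the binomial theorem for the natural exponent $-N/c$. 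Putting $N=mn$, $y=x/n$, $z=e^{is/m}$ yields
\begin{equation*}
	L_n(e^{ist};x)=\left(1+\frac{cx\bigl(1-e^{is/m}\bigr)}{n}\right)^{-mn/c},
\end{equation*}
while for $c=0$ one computes directly $p_{mn,j}^{[0]}(x/n)=e^{-mx}(mx)^j/j!$, hence $L_n(e^{ist};x)=e^{mx(e^{is/m}-1)}$ for every $n$. Letting $n\to\infty$ and using $\left(1+a/n\right)^{bn}\to e^{ab}$ — legitimate for complex $a$ because $1+cx(1-e^{is/m})/n$ lies, for all large $n$, in a fixed neighbourhood of $1$ on which the principal logarithm is analytic — gives
\begin{equation*}
	L_n(e^{ist};x)\longrightarrow\exp\!\Bigl(-\tfrac{m}{c}\,cx(1-e^{is/m})\Bigr)=e^{mx(e^{is/m}-1)},
\end{equation*}
and (trivially) the same limit when $c=0$. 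On the other hand $L(e^{ist};x)=\sum_{j\ge0}e^{-mx}\frac{(mx)^j}{j!}e^{isj/m}=e^{mx(e^{is/m}-1)}$, which coincides with this limit and is manifestly continuous in $s$, being the characteristic function of $Y^x/m$. Both hypotheses of Theorem \ref{T2.1} are then met, and (\ref{e3.1}) follows.

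The only delicate points are the validity of the generating-function identity for $|z|=1$ and the branch of the power $(\,\cdot\,)^{-mn/c}$ when $c>0$; both are handled by the single observation that $1+c(x/n)(1-e^{is/m})$ stays in a compact subset of the right half-plane close to $1$. For $c\le0$ the relevant computation is either immediate ($c=0$) or purely polynomial ($c<0$), so nothing delicate happens there, and the passage to the limit $(1+a/n)^{bn}\to e^{ab}$ is routine. I do not anticipate any genuine obstacle beyond this bookkeeping.
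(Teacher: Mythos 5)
Your proof is correct and follows essentially the same route as the paper: reduce to the exponentials via Theorem \ref{T2.1}, evaluate $B_{mn}^{[c]}\left(e^{isnt};\frac{x}{n}\right)=\left(1+c\frac{x}{n}(1-e^{is/m})\right)^{-mn/c}$ from the binomial series (with the $c=0$ case being an exact identity), and pass to the limit $e^{mx(e^{is/m}-1)}=B_m^{[0]}(e^{ist};x)$. The additional care you take with convergence of the series on $|z|=1$ and with the branch of the complex power only makes explicit what the paper leaves implicit.
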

\begin{proof}
	Consider first the case $c=0$. Using elementary calculations we find 
	$$  B_{mn}^{[0]}\left(f(nt);\dfrac{x}{n}\right)=B_{m}^{[0]}\left(f(t);x\right).$$
	Now let $c\ne 0$. Then
	\begin{align*}
		{ B}_{n}^{[c]}\left(e^{ist};x\right)&=\displaystyle\sum_{j=0}^{\infty}\dfrac{n^{c,\overline{j}}}{j!}x^j(1+cx)^{-n/c-j}e^{isj/n}\\
		&=(1+cx)^{-n/c}\displaystyle\sum_{j=0}^{\infty}\dfrac{n(n+c)\dots(n+(j-1)c)}{j!}\left(\dfrac{xe^{is/n}}{1+cx}\right)^j\\
		&=(1+cx)^{-n/c}\displaystyle\sum_{j=0}^{\infty}{n/c+j-1\choose j}\left(\dfrac{cxe^{is/n}}{1+cx}\right)^j\\
		&=(1+cx)^{-n/c}\left(1-\dfrac{cxe^{is/n}}{1+cx}\right)^{-n/c},
	\end{align*}
and so
\begin{equation}\label{e3.5}
{	B}_{n}^{[c]}(e^{ist};x)=\left(1-cx(e^{is/n}-1)\right)^{-n/c}.
\end{equation}
Define
\begin{align}
	& L_n(f(t);x):={ B}_{mn}^{[c]}\left(f(nt);\dfrac{x}{n}\right),\label{e3.6}\\
	& L(f(t);x):=B_{m}^{[0]}(f(t);x).\label{e3.7}
	\end{align}
From (\ref{e3.5}), (\ref{e3.6}), (\ref{e3.7}) we obtain
\begin{align*}
	\displaystyle\lim_{n\to\infty} L_n(e^{ist};x)&=\lim_{n\to\infty}{ B}_{mn}^{[c]} \left(e^{isnt};\dfrac{x}{n}\right)\\
	&=\displaystyle\lim_{n\to\infty}\left(1-c\dfrac{x}{n}\left(e^{is/m}-1\right)\right)^{-mn/c}=e^{mx\left(e^{is/m}-1\right)}\\
	&=B_{m}^{[0]}\left(e^{ist};x\right)=L(e^{ist};x).
\end{align*}
Thus (\ref{e2.2p}) is satisfied and (\ref{e3.1}) is a consequence of (\ref{e2.3p}), (\ref{e3.6}) and (\ref{e3.7}).
\end{proof}
	\section{The $k$th order Kantorovich
	modification of the Baskakov type operators}\label{s4}
The $k$-th order Kantorovich modifications of the operators ${ B}_{n}^{[c]}$ are defined by
$$ {B}_{n}^{[c](k)}:=D^k\circ { B}_{n}^{[c]}\circ { I}_k, \,\, k\in {\mathbb N},$$
where $D^{k}$ denotes the $k$-th order ordinary differential operator and
$${ I}_kf=f,\textrm{ if } k=0 \textrm{ and } \left({ I}_kf\right)(x)=\displaystyle\int_0^x\dfrac{(x-t)^{k-1}}{(k-1)!}f(t) dt, \textrm{ if } k\in \mathbb{N}.   $$

We have ${ B}_{n}^{[c](k)}\left(f(t);x)\right):=\dfrac{d^k}{dx^k}{ B}_{n}^{[c]}\left((I_kf)(t);x\right).$
Using (\ref{e3.5}) we get
\begin{align}
{ B}_{n}^{[c](k)}(e^{ist};x)&=\dfrac{1}{(is)^k}\dfrac{d^k}{dx^k}{ B}_{n}^{[c]}\left(e^{ist};x\right)\nonumber\\
	&=n^{c,\overline{k}}\left(\dfrac{e^{is/n}-1}{is}\right)^k\left(1-cx(e^{is/n}-1)\right)^{-n/c-k},\,\, c\ne 0,\label{e4.1}\\
		{ B}_{n}^{[c](k)}(e^{ist};x)&=
		\left(\dfrac{n(e^{is/n}-1)}{is}\right)^ke^{nx(e^{is/n}-1)},\,\, c=0.\label{e4.2}
\end{align}
The image of the constant function {\bf 1} under $	{B}_{n}^{[c](k)}$ is obtained with $s=0$ and is the constant function $\dfrac{n^{c,\overline{k}}}{n^k}{\bf 1}$. Therefore the operators 
	\begin{equation}\label{e4.3}
		{V}_{n}^{[c](k)}:=(n^k/n^{c,\overline{k}})	{ B}_{n}^{[c](k)}
	\end{equation}
preserve the constant function {\bf 1}.

\begin{theorem}\label{T4.1}
	For $m\in {\mathbb N}$, $c\in {\mathbb R}$, $x\in I_c$, and $f\in C_b(I)$, we have
	\begin{equation}\label{e4.4}
	\displaystyle\lim_{n\to\infty}	V_{mn}^{[c](k)}\left(f(nt);\dfrac{x}{n}\right):=V_{m}^{[0](k)}\left(f(t);x\right).
	\end{equation}
\end{theorem}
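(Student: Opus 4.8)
The plan is to apply Theorem \ref{T2.1} exactly as in the proof of \eqref{e3.1}, so the task reduces to verifying the two hypotheses of that theorem for the rescaled operators. Concretely, define
\begin{align*}
	& L_n(f(t);x):=V_{mn}^{[c](k)}\left(f(nt);\dfrac{x}{n}\right),\\
	& L(f(t);x):=V_{m}^{[0](k)}\left(f(t);x\right),
\end{align*}
and note that by \eqref{e4.1}, \eqref{e4.2} and the normalization \eqref{e4.3} we already have explicit closed forms for $V_{n}^{[c](k)}(e^{ist};x)$. The first step is therefore to substitute $n\mapsto mn$, $x\mapsto x/n$ and replace $s$ by $ns$ (since $e^{isnt}$ is the relevant test function after the inner rescaling $t\mapsto nt$), obtaining $L_n(e^{ist};x)$ in closed form; then compute the pointwise limit as $n\to\infty$ and check it equals $L(e^{ist};x)=V_{m}^{[0](k)}(e^{ist};x)$.

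The computation of the limit is the only place where any real work happens, and it is routine asymptotics. For $c\ne 0$, after the substitution one gets, up to the normalizing factor $(mn)^k/(mn)^{c,\overline{k}}\to 1/m^k$, an expression of the form $(mn)^{c,\overline{k}}\bigl((e^{is/m}-1)/(is)\bigr)^k\bigl(1-c\tfrac{x}{n}(e^{is/m}-1)\bigr)^{-mn/c-k}$; using $(mn)^{c,\overline{k}}\sim (mn)^k$ so that the normalization cancels it to $m^{-k}(mn)^k/(mn)^k\cdot m^k = $ the right constant, and using $\bigl(1-c\tfrac{x}{n}(e^{is/m}-1)\bigr)^{-mn/c}\to e^{mx(e^{is/m}-1)}$ exactly as in the proof of \eqref{e3.1}, one arrives at $\bigl((e^{is/m}-1)/(is)\bigr)^k e^{mx(e^{is/m}-1)}$, which is precisely $V_{m}^{[0](k)}(e^{ist};x)$ read off from \eqref{e4.2} with $n$ replaced by $m$. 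The case $c=0$ is even simpler: \eqref{e4.2} gives $V_{mn}^{[0](k)}(e^{isnt};x/n)=\bigl(m(e^{is/m}-1)/(is)\bigr)^k e^{mx(e^{is/m}-1)}$ already, with no limit needed.

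The continuity hypothesis on $s\to L(e^{ist};x)$ is immediate: the closed form $\bigl((e^{is/m}-1)/(is)\bigr)^k e^{mx(e^{is/m}-1)}$ (with the removable singularity at $s=0$ filled in by the value $1$, which matches the preservation of constants) is manifestly continuous in $s$ on all of $\mathbb{R}$. With both hypotheses of Theorem \ref{T2.1} verified, \eqref{e4.4} follows from \eqref{e2.3p}.

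I do not anticipate a genuine obstacle here; the main point to get right is bookkeeping — tracking how the inner rescaling $t\mapsto nt$ turns the test function $e^{ist}$ into $e^{isnt}$ and hence replaces $s/n$ by $s/m$ inside the formulas, and checking that the factor $n^{c,\overline{k}}$ in \eqref{e4.1} is exactly cancelled by the normalization $n^k/n^{c,\overline{k}}$ in \eqref{e4.3} in the limit, leaving the clean Szász–Mirakjan–Kantorovich symbol. If one wanted to be careful about the limit $np_n\to\lambda$ style step, it is literally the same estimate already used for \eqref{e3.1}, applied with $\lambda = mx(e^{is/m}-1)$ (a complex constant), which the Lévy continuity argument in Theorem \ref{T2.1} tolerates since only the pointwise convergence of characteristic functions is needed.
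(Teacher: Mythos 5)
Your strategy is exactly the paper's: reduce to the test functions $e^{ist}$ via Theorem \ref{T2.1}, read off the closed form of the symbol from \eqref{e4.1}--\eqref{e4.3}, and pass to the limit exactly as in the proof of \eqref{e3.1}. The only thing to fix is the constant bookkeeping in the case $c\ne 0$: the normalization in \eqref{e4.3} cancels the factor $(mn)^{c,\overline{k}}$ from \eqref{e4.1} \emph{exactly}, for every $n$ (not merely asymptotically), leaving $(mn)^k\left(\dfrac{e^{ins/(mn)}-1}{ins}\right)^k=\left(\dfrac{m}{is}\left(e^{is/m}-1\right)\right)^k$ after the substitutions $n\mapsto mn$, $x\mapsto x/n$, $s\mapsto ns$; in particular $(mn)^k/(mn)^{c,\overline{k}}\to 1$, not $1/m^k$, and your displayed limit $\left(\left(e^{is/m}-1\right)/(is)\right)^k e^{mx(e^{is/m}-1)}$ is missing the factor $m^k$. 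Once that factor is restored, the expression agrees with your (correct) $c=0$ formula and with $V_m^{[0](k)}(e^{ist};x)$, and the rest of the argument --- including your explicit check of the continuity in $s$ of the limit symbol, which the paper leaves implicit --- is fine.
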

\begin{proof}
	According to Theorem \ref{T2.1}, it suffices to show
that (\ref{e4.4}) holds for $f(t):=e^{ist},\, s\in{\mathbb R}$.

Let $c\neq 0$. Using (\ref{e4.3}) and (\ref{e4.1}) we have 
\begin{align*}
	\displaystyle\lim_{n\to\infty} V_{mn}^{[c](k)}\left(e^{isnt};\dfrac{x}{n}\right)&=\displaystyle\lim_{n\to\infty}\left(\dfrac{m}{is}(e^{is/m}-1)\right)^k\left(1-c\dfrac{x}{n}(e^{is/m}-1)\right)^{-mn/c-k}\\
	&=\left(\dfrac{m}{is}(e^{is/m}-1)\right)^ke^{mx(e^{is/m}-1)}\\
	&=V_{m}^{[0](k)}(f(t);x).
\end{align*}
Using (\ref{e4.3}) and (\ref{e4.2}) it is easy to check that (\ref{e4.4}) holds for $f(t)=e^{ist},\,s\in{\mathbb R}$, also for $c=0$. Now (\ref{e4.4}) for $f\in C_b(I)$ is a consequence of Theorem \ref{T2.1}.
\end{proof}
\begin{remark}
	Since $\displaystyle\lim_{n\to\infty} n^k/n^{c,\overline{k}}=1$, from (\ref{e4.3}) and (\ref{e4.4}) we derive
	$$ \displaystyle\lim_{n\to\infty} { B}_{mn}^{[c](k)}\left(f(nt);\dfrac{x}{n}\right)={ B}_{m}^{[c](k)}\left(f(t);x\right).  $$
\end{remark}
\section{Discrete operators associated with $	V_{n,\rho}^{[c](k)}$}\label{s5}
Depending on a parameter  $\rho \in \mathbb{R}^+$ the linking operators are given by
\begin{equation*}
	({ B}_{n,\rho}^{[c]} f)(x) = \sum_{j=0}^{\infty} F_{n,j}^{[c],\rho} (f) p_{n,j}^{[c]} (x),\, f:I_c \longrightarrow \mathbb{R},
\end{equation*}
where 
$$
F_{n,j}^{[c],\rho} (f) = \left \{
\begin{array}{cll}
	f(0) & , & j=0, \, c \in \mathbb{R},
	\\
	\displaystyle f \left (-\frac{1}{c} \right ) & , & j=-\frac{n}{c} , \, c <0,
	\\
	\displaystyle \int_{I_c} \mu_{n,j}^{[c],\rho} (t) f(t) dt & , & \mbox{otherwise},
\end{array} \right .
$$
with
$$
\mu_{n,j}^{[c],\rho} (t) = \left \{ \begin{array}{cl}
	\displaystyle \frac{(-c)^{j\rho}}{B \left (j\rho,- \left (\frac{n}{c}+j \right )\rho \right )} t^{j\rho -1} (1+ct)^{-\left ( \frac{n}{c}+j\right)\rho -1} &, \, c < 0 , \\
	\displaystyle \frac{(n\rho)^{j\rho}}{\Gamma (j \rho )} t^{j\rho-1}  e^{-n\rho t} &, \, c = 0  ,\\
	\displaystyle \frac{c^{j\rho}}{B \left (j\rho,\frac{n}{c}\rho+1 \right )} t^{j\rho -1} (1+ct)^{-\left ( \frac{n}{c}+j\right)\rho -1} &, \, c > 0 .
\end{array} \right .
$$
By $B(x,y) = \int_0^1 t^{x-1} (1-t)^{y-1} dt$, $x,y > 0$ we denote Euler's Beta function and by $\Gamma$ the Gamma function.

These operators represent a link between Baskakov type operators ${ B}_{n}^{[c]}$ (for $\rho=\infty$) and the genuine Baskakov-Durrmeyer type operators (for $\rho=1$). Convergence results concerning the linking operators can be found in \cite[Section 5]{K2}, \cite[Th. 2.3]{K4}, \cite[Th. 1, Th. 2, Cor. 1]{K7} and \cite[Th. 4]{K12}.

Let
$
{ B}_{n,\rho}^{[c](k)}:=D^k \circ { B}_{n,\rho}^{[c]}\circ I_k
$ be the $k$-th order Kantorovich modification of the operators ${ B}_{n,\rho}^{[c]}$ and $
	V_{n,\rho}^{[c] (k)} := \dfrac{(n\rho)^{c,\underline{k}}}{\rho^k n^{c,\overline{k}}} { B}_{n,\rho}^{[c] (k)}.
$
For $\rho\in {\mathbb N}$ explicit representation are given in \cite{Ha}, \cite{Hb} and \cite{India}.

The discrete operator  associated with $V_{n,\rho}^{[c](k)}$ was introduced in \cite{He-Na-Ra} as follows

\begin{equation*}
	D_{n,\rho}^{[c](k)}\left(f(t),x\right)=\displaystyle\sum_{j=0}^{\infty}f\left(\dfrac{(2j+k)\rho+k}{2(n\rho-kc)}\right)p_{n+kc,j}^{[c]}(x).
\end{equation*}
Note that $D_{n,\rho}^{[c](k)}e_0=e_0$.
For $c\ne 0$ and $\rho n>kc$, we have
\begin{align*}D_{n,\rho}^{[c](k)}\left(e^{ist};x\right)&=e^{\frac{isk(\rho+1)}{2(n\rho-kc)}}(1+cx)^{-n/c-k}\sum_{j=0}^{\infty}{n/c+k+j-1\choose j}\left(\dfrac{cxe^{is\rho/(n\rho-kc)}}{1+cx}\right)^j\\
&=e^{\frac{isk(\rho+1)}{2(n\rho-kc)}}(1+cx)^{-n/c-k}\left(1-\dfrac{cxe^{is\rho/(n\rho-kc)}}{1+cx}\right)^{-n/c-k}. \end{align*}
Therefore,
\begin{equation}
	\label{e5.1}
	D_{n,\rho}^{[c](k)}\left(e^{ist};x\right)=e^{\frac{isk(\rho+1)}{2(n\rho-kc)}}\left(1-cx(e^{is\rho/(n\rho-kc)}-1)\right)^{-n/c-k}.
\end{equation}
Similarly,
\begin{equation}
	\label{e5.2} D_{n,\rho}^{[0](k)}(e^{ist};x)=e^{\frac{isk(\rho+1)}{2n\rho}+nx(e^{is/n}-1).}
\end{equation}
\begin{theorem}
	Let $m\in {\mathbb N}$, $c\in {\mathbb R}$,  $x\in I_c$, $f\in C_b(I)$. Then
	\begin{equation}\label{e5.3}
		\displaystyle\lim_{n\to\infty}D_{mn,\rho}^{[c](k)}\left(f(nt);\dfrac{x}{n}\right)=D_{m,\rho}^{[0](k)}\left(f(t);x\right).
		\end{equation}
\end{theorem}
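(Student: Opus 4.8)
The plan is to mimic the proof of Theorem~\ref{T4.1}. First I would observe that the discrete operators $D_{mn,\rho}^{[c](k)}$ and $D_{m,\rho}^{[0](k)}$ fit the framework of Section~\ref{s2}: each acts on $C_b(I_c)$ via a probability distribution on $I_c$ (finitely supported when $c<0$, otherwise countably supported) whose weights are continuous in $x$, and the relation $D_{n,\rho}^{[c](k)}e_0=e_0$ shows that these weights sum to $1$. Thus Theorem~\ref{T2.1} is applicable, and the task reduces to (i) checking that $s\mapsto D_{m,\rho}^{[0](k)}(e^{ist};x)$ is continuous on ${\mathbb R}$ for each fixed $x$, which is immediate from (\ref{e5.2}), and (ii) proving (\ref{e5.3}) for the test functions $f(t)=e^{ist}$, $s\in{\mathbb R}$.

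For step (ii) I would set $L_n(f(t);x):=D_{mn,\rho}^{[c](k)}\bigl(f(nt);\frac{x}{n}\bigr)$ and $L(f(t);x):=D_{m,\rho}^{[0](k)}(f(t);x)$, and treat $c\ne0$ first. Since $f(nt)=e^{i(ns)t}$, evaluating (\ref{e5.1}) at parameter $mn$, argument $\frac{x}{n}$ and frequency $ns$ yields, for $n$ large enough that $mn\rho>kc$,
$$L_n(e^{ist};x)=\exp\left(\frac{isnk(\rho+1)}{2(mn\rho-kc)}\right)\left(1-c\,\frac{x}{n}\,(e^{isn\rho/(mn\rho-kc)}-1)\right)^{-mn/c-k}.$$
Then I would pass to the limit termwise: $\frac{snk(\rho+1)}{2(mn\rho-kc)}\to\frac{sk(\rho+1)}{2m\rho}$ and $\frac{sn\rho}{mn\rho-kc}\to\frac{s}{m}$, and with $a_n:=cx(e^{isn\rho/(mn\rho-kc)}-1)\to cx(e^{is/m}-1)=:a$ one gets $(1-a_n/n)^{-mn/c-k}\to e^{ma/c}=e^{mx(e^{is/m}-1)}$ via the expansion $\log(1-a_n/n)=-a_n/n+O(n^{-2})$. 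Hence $\lim_{n\to\infty}L_n(e^{ist};x)=e^{\frac{isk(\rho+1)}{2m\rho}}e^{mx(e^{is/m}-1)}$, which by (\ref{e5.2}) with $n$ replaced by $m$ equals $D_{m,\rho}^{[0](k)}(e^{ist};x)=L(e^{ist};x)$. For $c=0$ the same substitution in (\ref{e5.2}) gives $L_n(e^{ist};x)=\exp\bigl(\frac{isk(\rho+1)}{2m\rho}+mx(e^{is/m}-1)\bigr)=L(e^{ist};x)$ identically, so (\ref{e2.2p}) holds trivially. Invoking Theorem~\ref{T2.1} then delivers (\ref{e5.3}) for all $f\in C_b(I)$.

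I do not expect a serious obstacle: once (\ref{e5.1})--(\ref{e5.2}) are in hand, the whole argument is the same routine limit computation as in Sections~\ref{s3}--\ref{s4}. The only points deserving care are the frequency rescaling $s\mapsto ns$ forced by the argument $f(nt)$ (so that (\ref{e5.1}) must be read at frequency $ns$, not $s$), and the limit of the power term $(1-a_n/n)^{-mn/c-k}$, where one should note that replacing the $n$-dependent base $a_n$ by its limit $a$ is harmless because $a_n-a=o(1)$ while the exponent grows linearly in $n$.
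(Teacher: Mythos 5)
Your proposal is correct and follows exactly the paper's (much terser) argument: verify \eqref{e5.3} for the test functions $e^{ist}$ using \eqref{e5.1}--\eqref{e5.2} with the frequency rescaled to $ns$, then invoke Theorem~\ref{T2.1}. Your limit computations, including the treatment of $(1-a_n/n)^{-mn/c-k}$ and the identity in the case $c=0$, are the details the paper leaves to the reader, and they check out.
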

\begin{proof} Using (\ref{e5.1}) and (\ref{e5.2}) it is easy to check that (\ref{e5.3}) is valid for $f(t)=e^{ist},\,s\in {\mathbb R}$. Therefore, (\ref{e5.3}) is a consequence of Theorem \ref{T2.1}.
 	\end{proof}
 
 \section{Other examples}\label{s6}
 \subsection{} 
 For a non-negative constant $a$, independent of $n$, in \cite{111} it was introduced the operator
 $$ Q_n^a(f;x)=\displaystyle\sum_{k=0}^{\infty}q_{n,k}^a(x)f\left(\dfrac{k}{n}\right),   $$
 where $$q_{n,k}^a(x)=e^{-\frac{ax}{1+x}}\frac{\sum_{i=0}^k{k\choose i}n^{1,\overline{i}}a^{k-i}}{k!}\dfrac{x^k}{(1+x)^{n+k}}.$$
 For the operators $Q_n^a$  we have (see \cite[1.10.2]{Gupta_Rassias})
 $$ Q_n^a(e^{ist};x)=(1+x-xe^{is/n})^{-n}\exp\left(\dfrac{ax}{1+x}\left(e^{is/n}-1\right)\right)  $$
 Consequently,
 \begin{align*}\displaystyle\lim_{n\to\infty}Q_{mn}^a\left(e^{isnt};\dfrac{x}{n}\right)&=\lim_{n\to\infty}\left(1+x/n\left(1-e^{is/m}\right)\right)^{-mn}e^{\frac{ax}{n+x}\left(e^{is/m}-1\right)}\\
 	&=e^{mx(e^{is/m}-1)}=B_{m}^{[0]}(e^{ist};x).
 \end{align*}
Using Theorem \ref{T2.1} we get for $f\in C_b[0,\infty)$, $m\geq 1$, $x\in [0,\infty)$,
$$ \displaystyle\lim_{n\to\infty} Q_{mn}^a\left(f(nt);\dfrac{x}{n}\right)=B_{m}^{[0]}(f(t);x).  $$

\subsection{} 
Let $c\ne 0$ and 
\begin{align*}
	\sigma:I_c\to I_{-c},\,\, \sigma(x)=\dfrac{x}{1+cx},\\
	\psi:I_c\to I_{-c},\,\, \psi(x)=\dfrac{x}{1-cx}.
	\end{align*}
Let $B_{n,1}^{[c](k)}$ be the
kth order Kantorovich modifications of the genuine Baskakov-Durrmeyer type operators.
In \cite{Maja2015} the second author of this paper presented a method to relate the
 kth order Kantorovich modifications of Durrmeyer type variants of
 Bleimann-Butzer-Hahn operators for $c<0$ and
Meyer-K\"onig and Zeller operators  for $c>0$ with $B_{n,1}^{[c](k)}$ as follows
 
$$\overline{B}_{n,1}^{[c](k)}(f;x):=B_{n,1}^{[c](k)}(f\circ\sigma;\psi(x)).$$
For details see \cite[Sec.6]{Maja2015}.
Using Theorem \ref{T2.1} we derive
\begin{align}
	\displaystyle\lim_{n\to\infty}\overline{B}_{mn,1}^{[c](k)}\left(f(nt);\dfrac{x}{n}\right)&=\displaystyle\lim_{n\to\infty}B_{mn,1}^{[c](k)}\left((f\circ\sigma)(nt);\dfrac{\psi(x)}{n}\right)\nonumber\\
	&=B_{m,1}^{[0](k)}\left((f\circ\sigma)(t);\psi(x)\right).\label{e.Maja}
\end{align}

\section{The Bleimann-Butzer-Hahn operators}\label{s7}
Let  \begin{equation}\label{BBH}H_n(f(t);x):=(1+x)^{-n}\displaystyle\sum_{k=0}^n{n\choose k}x^kf\left(\dfrac{k}{n-k+1}\right)\end{equation} be the Bleimann-Butzer-Hahn operators \cite{BBH}.  Using the probabilistic approach it was proved in \cite[Th. 2 a)]{Cal_L} that
\begin{equation}
	\label{e6.1} \displaystyle\lim_{n\to\infty}H_{mn}\left(f\left(\dfrac{nt}{1+t}\right);\dfrac{x}{n}\right)=B_m^{[0]}\left(f(t);x\right).
\end{equation}
Here is an analytic  proof based on Theorem \ref{T2.1}. In fact, all that we have to prove is that (\ref{e6.1}) holds for $f(t):=e^{ist}$, $s\in {\mathbb R}$.

This immediately follows from 
\begin{align*}
	\displaystyle\lim_{n\to\infty}H_{mn}\left(e^{isnt/(1+t)};\dfrac{x}{n}\right)&=\displaystyle\lim_{n\to\infty}\left(1+\dfrac{x}{n}\right)^{-mn}\sum_{k=0}^{mn}{mn\choose k}\left(\dfrac{x}{n}\right)^ke^{isnk/(mn+1)}\\
	&=\displaystyle\lim_{n\to\infty}\left(1+\dfrac{x}{n}\right)^{-mn}\left(1+\dfrac{x}{n}e^{isn/(mn+1)}\right)^{mn}\\
	&=e^{mx(e^{is/m}-1)}=B_m^{[0]}(e^{ist};x).
\end{align*}

\begin{exam} It was proved in \cite[Corollary 3]{Adell} that
	\begin{equation}
		\label{e9.8} \displaystyle\lim_{n\to\infty} H_{mn}\left[ f\left(\dfrac{(mn+1)t}{m(1+t)}\right);\dfrac{x}{n}\right]=B_m^{[0]}(f(t);x).
	\end{equation}
	Here is the proof based on Theorem \ref{T2.1}. We have to prove (\ref{e9.8}) for $f(t)=e^{ist},\,\, s\in {\mathbb R}$. 
	\begin{align*}
		&	\displaystyle\lim_{n\to\infty}H_{mn}\left[\exp\left(is\dfrac{(mn+1)t}{m(1+t)}\right);\dfrac{x}{n}\right]\\
		&=\displaystyle\lim_{n\to\infty}\left(1+\dfrac{x}{n}\right)^{-mn}\sum_{k=0}^{mn}{mn\choose k}\left(\dfrac{x}{n}\right)^k\exp\left(is\dfrac{(mn+1)\frac{k}{mn-k+1}}{m\left(1+\frac{k}{mn-k+1}\right)}\right)\\
		&=\displaystyle\lim_{n\to \infty}\left(1+\dfrac{x}{n}\right)^{-mn}\sum_{k=0}^{mn}{mn\choose k}\left(\dfrac{x}{n}\right)^k\left(e^{is/m}\right)^k\\
		&=\displaystyle\lim_{n\to\infty}\left(1+\dfrac{x}{n}\right)^{-mn}\left(1+\dfrac{x}{n}e^{is/m}\right)^{mn}=e^{-mn}e^{mxe^{is/m}}\\
		&=e^{mx(e^{is/m}-1)}=B_m^{[0]}(e^{ist};x).
	\end{align*}
	This ends the proof.
\end{exam}
Concerning this example see also \cite[Remark 2, p.496]{Adell}, where it is proved that

$$\left|H_{mn}\left( f\left(\dfrac{(mn+1)t}{m(1+t)}\right);\dfrac{x}{n}\right)-H_{mn}\left(f\left(\dfrac{nt}{1+t}\right);\dfrac{x}{n}\right)\right| \leq 2\omega\left(f;\dfrac{x}{mn}\right), $$
$\omega$ being the first modulus of continuity.

\section{Meyer-K\"onig and Zeller operators}
\label{s8}
The operators of	Meyer-K\"onig and Zeller \cite{MKZ} in the slight modification of Cheney and Sharma \cite{Sharma} are defined for $ f\in C[0,1]$ as follows
\begin{equation}\label{MKZ}{ M}_n(f;x)=\left\{\begin{array}{l}\displaystyle\sum_{k=0}^{\infty}{n+k\choose k}x^k(1-x)^{n+1}f\left(\dfrac{k}{n+k}\right),\,\,x\in[0,1),\\
		\vspace{-0.3cm}\\
		f(1), x=1.
	\end{array} \right. \end{equation}
\subsection{Modified Meyer-K\"onig and Zeller operators converging to Sz\'asz-Mirakyan operator}


Let us remark that
$$ M_n\left(f\left(\dfrac{t}{1-t}\right);x\right)=(1-x)^{n+1}\sum_{k=0}^{\infty}f\left(\dfrac{k}{n}\right){n+k\choose k}x^k, $$
and consequently. In particular 
\begin{align*}
	M_{mn}\left(f\left(\frac{nt}{1-t}\right);\frac{x}{n}\right)&=\left(1-\dfrac{x}{n}\right)^{mn+1}\sum_{k=0}^{\infty}{mn+k\choose k}\left(\dfrac{x}{n}\right)^kf\left(\frac{n\frac{k}{mn+k}}{1-\frac{k}{mn+k}}\right)\\
	&=\left(1-\dfrac{x}{n}\right)^{mn+1}\sum_{k=0}^{\infty}{mn+k\choose k}\left(\dfrac{x}{n}\right)^kf\left(\dfrac{k}{m}\right),
\end{align*}
and so,
\begin{align*}
	\displaystyle\lim_{n\to\infty}	M_{mn}\left(e^{is\dfrac{nt}{1-t}};\dfrac{x}{n}\right)&=\displaystyle\lim_{n\to\infty}\left(1-\dfrac{x}{n}\right)^{mn+1}\sum_{k=0}^{\infty}{mn+k\choose k}\left(\dfrac{x}{n}\right)^k\left(e^{is/m}\right)^k\\
	&=\displaystyle\lim_{n\to\infty}\left(1-\dfrac{x}{n}\right)^{mn+1}\dfrac{1}{(1-\frac{x}{n}e^{is/m})^{mn}}= e^{-mx}e^{mxe^{is/m}}.
\end{align*}
We see that
$$\displaystyle\lim_{n\to\infty}M_{mn}\left(e^{is\frac{nt}{1-t}};\dfrac{x}{n}\right)= e^{mx(e^{is/m}-1)}=B_m^{[0]}(e^{ist};x),  $$
and therefore,
$$\displaystyle\lim_{n\to\infty}M_{mn}\left(f\left(\frac{nt}{1-t}\right);\dfrac{x}{n}\right)= B_m^{[0]}\left(f(t);x\right).  $$
In a similar way it can be proved that

$$ \displaystyle\lim_{n\to\infty}M_{mn}\left[f\left(\dfrac{(mn+1)t}{m(1-t)}\right);\dfrac{x}{n}\right]=B_m^{[0]}(f(t);x). $$

\subsection{Modified Meyer-K\"onig and Zeller operators converging to Gamma operator}

Here we consider another modification of Meyer-K\"onig and Zeller operators, namely
\begin{align*}
	M_m\left[f\left(\dfrac{t}{n(1-t)}\right);\dfrac{nx}{1+nx}\right]&=\left(1-\dfrac{nx}{1+nx}\right)^{m+1}\displaystyle\sum_{k=0}^{\infty}f\left(\frac{\frac{k}{m+k}}{n\left(1-\frac{k}{m+k}\right)}\right){m+k\choose k}\left(\dfrac{nx}{1+nx}\right)^k\\
	&=\dfrac{1}{(1+nx)^{m+1}}\displaystyle\sum_{k=0}^{\infty} f\left(\dfrac{k}{mn}\right){m+k\choose k}\left(\dfrac{nx}{1+nx}\right)^k.
\end{align*}
In particular,
\begin{align*}
	\displaystyle\lim_{n\to\infty}	M_m\left[e^{is\frac{t}{n(1-t)}};\dfrac{nx}{1+nx}\right]&=\displaystyle\lim_{n\to\infty}
	\dfrac{1}{(1+nx)^{m+1}}\sum_{k=0}^{\infty}{m+k\choose k}\left(\dfrac{nx}{1+nx}e^{is/mn}\right)^k\\&=\displaystyle\lim_{n\to\infty}\dfrac{1}{(1+nx)^{m+1}}\dfrac{1}{\left(1-\frac{nx}{1+nx}e^{is/mn}\right)^{m+1}}\\
	&=\displaystyle\lim_{n\to\infty}\dfrac{1}{[1+nx(1-e^{is/mn})]^{m+1}}=\dfrac{1}{\left(1-\frac{isx}{m}\right)^{m+1}}.
\end{align*}
Using (\ref{e7.2})
we get
\begin{equation}\label{e.Gamma1}\displaystyle\lim_{n\to\infty}M_m\left[f\left(\dfrac{t}{n(1-t)}\right);\dfrac{nx}{1+nx}\right]=G_{m+1}\left( f(t);\dfrac{m+1}{m}x\right).\end{equation}

\section{Convergence of Lototsky-Schnabl operators}\label{s9}
Let $\lambda:[0,1]\to[0,1]$ be continuous, $\lambda(0)=1$. For $f\in C[0,1]$, $a\in[0,1]$, $x\in[0,1]$, we consider the function
$$ f_{x,a}(t):=f(at+(1-a)x), \,\, t\in[0,1]. $$ 
Using the Bernstein operators $(B_r^{[-1]})_{r\geq 1}$ one defines the Lototsky-Schnabl operators $A_{n,\lambda}:C[0,1]\to C[0,1]$,
$$ A_{n,\lambda}\left(f(t);x\right):=\displaystyle\sum_{r=0}^n {n\choose r}\lambda(x)^r (1-\lambda(x))^{n-r}B_r^{[-1]}\left(f_{x,\frac{r}{n}}(t);x\right).  $$
For details see \cite{Adell2}.
\begin{theorem} Given $f\in C_b[0,\infty)$, $x\in[0,\infty)$, $m\geq 1$, we have
	\begin{equation}
		\label{eA9.1}
		\displaystyle\lim_{n\to\infty} A_{mn,\lambda}\left(f(nt);\dfrac{x}{n}\right)=B_m^{[0]}\left(f(t);x\right),
	\end{equation}
	where $B_m^{[0]}$ is the classical Sz\'asz-Mirakjan operator.
\end{theorem}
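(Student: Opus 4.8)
The plan is to apply Theorem \ref{T2.1} with $I=[0,\infty)$ to the operators $L_n(f(t);x):=A_{mn,\lambda}\bigl(f(nt);\tfrac{x}{n}\bigr)$ and $L(f(t);x):=B_m^{[0]}(f(t);x)$. Each $A_{mn,\lambda}$ is a positive linear operator reproducing the constants, and unwinding its definition shows that, once $n$ is large enough that $\tfrac{x}{n}\in[0,1]$, the number $A_{mn,\lambda}\bigl(f(nt);\tfrac{x}{n}\bigr)$ is a finite convex combination of values of $f$; hence it equals $Ef(Z_n^x)$ for a discrete $[0,\infty)$-valued random variable $Z_n^x$ whose distribution depends continuously on $x$, and likewise $B_m^{[0]}(f(t);x)=Ef(Z^x)$ with $mZ^x$ Poisson-distributed of parameter $mx$ (cf.\ (\ref{smeq1})). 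So the standing assumptions of Section \ref{s2} hold, and by Theorem \ref{T2.1} it is enough to verify (\ref{e2.2p}), i.e.\ to prove (\ref{eA9.1}) for the test functions $f(t)=e^{ist}$, $s\in\mathbb R$.

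First I would record how $A_{n,\lambda}$ acts on an exponential. For $f(t)=e^{ist}$ one has $f_{x,a}(t)=e^{is(1-a)x}e^{isat}$, and since $B_r^{[-1]}(e^{ivt};x)=(1-x+xe^{iv/r})^r$ (immediate from the definition of $B_r^{[-1]}$), the choice $a=\tfrac{r}{n}$ gives $B_r^{[-1]}\bigl(f_{x,r/n}(t);x\bigr)=e^{is(1-r/n)x}(1-x+xe^{is/n})^r$. Inserting this into the definition of $A_{n,\lambda}$, factoring out $e^{isx}$ via $e^{is(1-r/n)x}=e^{isx}(e^{-isx/n})^r$, and summing the binomial series, one gets the closed form
\[
A_{n,\lambda}(e^{ist};x)=e^{isx}\Bigl(1-\lambda(x)+\lambda(x)\,e^{-isx/n}\bigl(1-x+xe^{is/n}\bigr)\Bigr)^{n}.
\]
Replacing $n\mapsto mn$, $s\mapsto sn$, $x\mapsto\tfrac{x}{n}$ turns this into
\[
A_{mn,\lambda}\Bigl(e^{isnt};\tfrac{x}{n}\Bigr)=e^{isx}\Bigl(1-\lambda(\tfrac{x}{n})+\lambda(\tfrac{x}{n})\,e^{-isx/(mn)}\bigl(1+\tfrac{x}{n}(e^{is/m}-1)\bigr)\Bigr)^{mn}.
\]

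The third step is the passage to the limit. Write the bracket as $1+\delta_n$ with
\[
\delta_n=-\lambda(\tfrac{x}{n})\bigl(1-e^{-isx/(mn)}\bigr)+\lambda(\tfrac{x}{n})\,e^{-isx/(mn)}\,\tfrac{x}{n}\,(e^{is/m}-1).
\]
Using $\lambda(\tfrac{x}{n})\to\lambda(0)=1$ (by continuity of $\lambda$ at $0$) together with $1-e^{-isx/(mn)}=\tfrac{isx}{mn}+O(n^{-2})$, one obtains $mn\,\delta_n\to -isx+mx(e^{is/m}-1)$, and since in addition $\delta_n\to0$ this yields $mn\log(1+\delta_n)\to -isx+mx(e^{is/m}-1)$, hence
\[
A_{mn,\lambda}\Bigl(e^{isnt};\tfrac{x}{n}\Bigr)\ \longrightarrow\ e^{isx}\,e^{-isx+mx(e^{is/m}-1)}=e^{mx(e^{is/m}-1)}=B_m^{[0]}(e^{ist};x).
\]
Thus (\ref{e2.2p}) holds and (\ref{eA9.1}) follows from Theorem \ref{T2.1}.

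I expect the main (mild) obstacle to be precisely this last combined limit. The factor $\lambda(\tfrac{x}{n})$ is only known to tend to $1$ by continuity of $\lambda$ at $0$, with no rate available, so one must check that it enters only multiplicatively and does not spoil either of the two genuinely first-order contributions — the one from $1-e^{-isx/(mn)}$ and the one from $\tfrac{x}{n}(e^{is/m}-1)$ — which must be added correctly after multiplying by $mn$. It is also reassuring to note that the spurious prefactor $e^{isx}$, produced by the shift structure $f_{x,a}(t)=f(at+(1-a)x)$ of the Lototsky--Schnabl operator, is cancelled exactly by the $e^{-isx}$ coming from the limit of the bracket; this cancellation serves as a consistency check on the whole computation.
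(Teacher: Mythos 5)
Your proof is correct and follows the same route as the paper's: reduce via Theorem \ref{T2.1} to the test functions $e^{ist}$, obtain a closed form for the Lototsky--Schnabl operator on exponentials by summing the two binomial series, and pass to the limit, with $\lambda(x/n)\to\lambda(0)=1$ entering only multiplicatively so that no rate is needed. The one divergence is notational but worth flagging. The paper reads $A_{mn,\lambda}\bigl(f(nt);\frac{x}{n}\bigr)$ as substituting $nt$ only in the final evaluation slot, i.e.\ it uses $f_{\frac{x}{n},\frac{r}{mn}}(nt)=f\bigl(\frac{rt}{m}+(1-\frac{r}{mn})\frac{x}{n}\bigr)$; you instead form $h(t)=f(nt)$ first and then build $h_{\frac{x}{n},\frac{r}{mn}}$, which yields the shift $(1-\frac{r}{mn})x$ in place of $(1-\frac{r}{mn})\frac{x}{n}$. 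As a result your closed form carries the prefactor $e^{isx}$ and the inner factor $e^{-isx/(mn)}$, while the paper's carries $e^{isx/n}$ and $e^{-isx/(mn^2)}$: in the paper's version both extra factors simply tend to $1$, whereas in yours they produce the $\pm isx$ cancellation you describe. Both versions of the modified operator converge to $e^{mx(e^{is/m}-1)}=B_m^{[0]}(e^{ist};x)$, so your argument establishes the theorem for a slightly different (and equally defensible) reading of the left-hand side of (\ref{eA9.1}); under the paper's reading the identical method goes through with the indicated constants, so nothing essential is at stake.
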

\begin{proof}
	Let $L_n(f(t);x):=A_{mn,\lambda}\left(f(nt);\dfrac{x}{n}\right)$. Remark that $$f_{\frac{x}{n},\frac{r}{mn}}(nt)=f\left(\dfrac{rt}{m}+\left(1-\dfrac{r}{mn}\right)\dfrac{x}{n}\right)$$ and 
	$$ B_r^{[-1]}\left(f_{\frac{x}{n},\frac{r}{mn}}(nt);\dfrac{x}{n}\right)=\displaystyle\sum_{k=0}^r{r\choose k}\left(\dfrac{x}{n}\right)^k\left(1-\dfrac{x}{n}\right)^{r-k}f\left(\dfrac{k}{m}+\left(1-\dfrac{r}{mn}\right)\dfrac{x}{n}\right). $$
	With $f(t):=e^{ist}$ we have
	$$ B_r^{[-1]}\left(f_{\frac{x}{n},\frac{r}{mn}}(nt);\dfrac{x}{n}\right)=\left(1-\dfrac{x}{n}+\dfrac{x}{n}e^{is/m}\right)^r e^{is\left(1-\frac{r}{mn}\right)\frac{x}{n}}.$$
	Therefore,
	\begin{align*}
		L_n(e^{ist};x)&=\displaystyle\sum_{r=0}^{mn}{mn\choose r}\lambda\left(\dfrac{x}{n}\right)^r\left(1-\lambda\left(\dfrac{x}{n}\right)\right)^{mn-r}\left(1-\dfrac{x}{n}+\dfrac{x}{n}e^{is/m}\right)^re^{is\left(1-\frac{r}{mn}\right)\frac{x}{n}}\\
		&=e^{isx/n}\left[1-\lambda\left(\dfrac{x}{n}\right)+\lambda\left(\dfrac{x}{n}\right)\left(1+\dfrac{x}{n}\left(e^{is/m}-1\right)\right)e^{-isx/mn^2}\right]^{mn}.
	\end{align*}
	By straightforward calculations we get
	$$\displaystyle\lim_{n\to\infty}L_n\left(e^{ist};x\right)=e^{mx(e^{is/m-1})}=B_m^{[0]}\left(e^{ist};x\right).  $$
	Now Theorem \ref{T2.1} shows that
	$$ \displaystyle\lim_{n\to\infty} A_{mn,\lambda}\left(f(nt);\dfrac{x}{n{}}\right)=\lim_{n\to\infty} L_n\left(f(t);x\right)=B_m^{[0]}(f(t);x), $$
	and this concludes the proof.
\end{proof}
\begin{remark}
	An estimate of the rate of convergence in (\ref{eA9.1}) can be found in \cite[Theorem 4]{Adell2}.
\end{remark}

\section{Convergence toward Gamma type operators}\label{s10}
Consider the Gamma operator
\begin{equation}
	\label{e7.1} G_m(f(t);x):=\dfrac{x^{-m}}{(m-1)!}\int_0^{\infty} f\left(\dfrac{t}{m}\right)t^{m-1}e^{-t/x}du,
\end{equation}
where $f\in C_b[0,\infty)$, $x\in[0,\infty)$. (If $x=0$, $G_m(f(t);0)=f(0))$.

It is not difficult to prove  that
\begin{equation}
	\label{e7.2}
	G_m\left(e^{ist};x\right)=\left(1-\dfrac{isx}{m}\right)^{-m},\,\, s\in{\mathbb R}
\end{equation}

As in Section \ref{s3}, it can be proved that
$$ V_{m}^{[c](k)}(e^{ist};x)=\left(\dfrac{m}{is}(e^{is/m}-1)\right)^k\left(1-cx(e^{is/m}-1)\right)^{-\frac{m}{c}-k},  $$
and so, for $c>0$,
\begin{align*}
	\displaystyle\lim_{n\to\infty} V_{m}^{[c](k)}\left(e^{ist/n};nx\right)&=\lim_{n\to\infty}\left(\dfrac{mn}{is}\left(e^{\frac{is}{mn}}-1\right)\right)^k\left(1-cnx(e^{is/mn}-1)\right)^{-\frac{m}{c}-k}\\
	&=\left(1-\dfrac{iscx}{m}\right)^{-\frac{m}{c}-k}=G_{\frac{m}{c}+k}\left(e^{ist};\dfrac{m+ck}{m}x\right).
\end{align*}
Now from Theorem \ref{T2.1} we deduce
\begin{theorem}
	With the above notation we have
	\begin{equation}\label{e.Gamma}
		\displaystyle\lim_{n\to\infty} V_{m}^{[c](k)}\left(f\left(\dfrac{t}{n}\right);nx\right)=G_{\frac{m}{c}+k}\left(f(t);\dfrac{m+ck}{m}x\right). \end{equation}
\end{theorem}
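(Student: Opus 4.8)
\noindent\emph{Proof idea.} The plan is to read the statement off Theorem~\ref{T2.1}. Fix $m,c>0$ and $k$, and on $C_b[0,\infty)$ put
\[
L_n(f(t);x):=V_m^{[c](k)}\!\left(f\!\left(\tfrac{t}{n}\right);nx\right),\qquad
L(f(t);x):=G_{\frac{m}{c}+k}\!\left(f(t);\tfrac{m+ck}{m}x\right),
\]
where now $n$ is the running index and $m$ a frozen parameter; note this is the ``reverse'' rescaling to the one used in Sections~\ref{s3}--\ref{s9}, which is why a Gamma operator, rather than a Sz\'asz--Mirakjan operator, turns up as the limit. First I would check that $L_n$ and $L$ fit the set-up of Section~\ref{s2}: by \eqref{e4.3} the operator $V_m^{[c](k)}$ is a positive linear operator that reproduces the constants (positivity of the $k$th order Kantorovich modifications being known, cf.\ \cite{Ha,Hb,India,He-Na-Ra}), and $G_{m/c+k}$ is a classical Markov operator; hence both act on $C_b[0,\infty)$, both are expectations $Ef(Z_n^x)$, $Ef(Z^x)$ against $[0,\infty)$-valued random variables, and the continuity in $x$ demanded in Section~\ref{s2} follows from their explicit representations. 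Thus it suffices to verify \eqref{e2.2p} on the exponentials $t\mapsto e^{ist}$ together with the continuity of $s\mapsto L(e^{ist};x)$.

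For \eqref{e2.2p} I would use the formula recorded above (it follows from \eqref{e4.1} and \eqref{e4.3}, i.e.\ from differentiating the $m$-analogue of \eqref{e3.5} $k$ times and normalizing)
\[
V_m^{[c](k)}(e^{ist};x)=\left(\frac{m}{is}\big(e^{is/m}-1\big)\right)^{\!k}\big(1-cx(e^{is/m}-1)\big)^{-\frac{m}{c}-k}.
\]
Replacing $s$ by $s/n$ and $x$ by $nx$ gives
\[
L_n(e^{ist};x)=\left(\frac{mn}{is}\big(e^{is/(mn)}-1\big)\right)^{\!k}\big(1-cnx(e^{is/(mn)}-1)\big)^{-\frac{m}{c}-k}.
\]
As $n\to\infty$ the bracketed $k$th power tends to $1$ because $\frac{e^{u}-1}{u}\to1$, while $cnx(e^{is/(mn)}-1)\to\frac{iscx}{m}$, so $L_n(e^{ist};x)\to\big(1-\frac{iscx}{m}\big)^{-m/c-k}$. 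On the other hand \eqref{e7.2}, applied with parameter $\tfrac{m}{c}+k$ and argument $\tfrac{m+ck}{m}x$, together with the identity $\tfrac{(m+ck)/m}{m/c+k}=\tfrac{c}{m}$, gives $G_{m/c+k}\!\big(e^{ist};\tfrac{m+ck}{m}x\big)=\big(1-\tfrac{iscx}{m}\big)^{-m/c-k}=L(e^{ist};x)$; this is exactly the computation displayed just above the theorem.

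Finally, $s\mapsto L(e^{ist};x)=\big(1-\tfrac{iscx}{m}\big)^{-m/c-k}$ is continuous on $\mathbb R$: the affine map $s\mapsto 1-\tfrac{iscx}{m}$ has real part $1$, hence stays in the open right half-plane, on which the principal power $z\mapsto z^{-(m/c+k)}$ is continuous even for a non-integer exponent (alternatively, this is the characteristic function of a Gamma law, so it is automatically continuous). With both hypotheses of Theorem~\ref{T2.1} in place, \eqref{e.Gamma} follows for every $f\in C_b[0,\infty)$. The steps that are not purely mechanical are (i) recording the closed form of $V_m^{[c](k)}(e^{ist};x)$, i.e.\ carrying over the derivative computation of Section~\ref{s4} with $n$ replaced by $m$, and (ii) confirming that $V_m^{[c](k)}$ genuinely is a positive, constant-preserving operator so that L\'evy's continuity theorem inside Theorem~\ref{T2.1} may be invoked; once those are granted, the passage to the limit is merely a limit of elementary functions.
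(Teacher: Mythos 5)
Your proposal is correct and follows essentially the same route as the paper: it uses the closed form $V_m^{[c](k)}(e^{ist};x)=\bigl(\tfrac{m}{is}(e^{is/m}-1)\bigr)^k\bigl(1-cx(e^{is/m}-1)\bigr)^{-m/c-k}$, substitutes $s\mapsto s/n$, $x\mapsto nx$, passes to the limit to get $\bigl(1-\tfrac{iscx}{m}\bigr)^{-m/c-k}$, identifies this with $G_{m/c+k}\bigl(e^{ist};\tfrac{m+ck}{m}x\bigr)$ via \eqref{e7.2}, and concludes by Theorem~\ref{T2.1}. The only difference is that you spell out the continuity of $s\mapsto L(e^{ist};x)$ and the positivity/constant-preservation of $V_m^{[c](k)}$, which the paper leaves implicit.
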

For $k=0$ and $c=1$, this is Theorem 2 d) in \cite{Cal_L}.

\section{Convergence toward Weierstrass operator}\label{s11}
Let $a>0$. The Weierstrass  operator is defined by (see \cite{FA-MC})
$$ W_a\left(f(t);x\right):=\dfrac{1}{\sqrt{2\pi a}}\int_{\mathbb R} f(t)e^{-\frac{(t-x)^2}{2a}}dt,\,\, f\in C_b[0,\infty), x\geq 0.  $$
The following results were briefly presented in \cite[(24.25)]{Vladislav} and \cite{Popa_Rasa}. Here we give detailed proofs based on Theorem \ref{T2.1}.
\begin{theorem}
	\label{T8.1} Let $0<u<1$ be given, and let $b_n>0$ such that $\lim_{n\to\infty}nb_n^2=a$. Then
	\begin{equation}
		\label{e8.1} 
		\displaystyle\lim_{n\to\infty} B_n^{[-1]}\left(f\left(x+\dfrac{nb_n}{\sqrt{u(1-u)}}(t-u)\right);u\right)=W_a\left(f(t);x\right),
	\end{equation}
where $B_n^{[-1]}$, $n\geq 1$, are the classical Bernstein operators.
\end{theorem}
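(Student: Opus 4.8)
The plan is to apply Theorem \ref{T2.1}: it suffices to verify (\ref{e8.1}) for the exponential test functions $f(t)=e^{ist}$, $s\in{\mathbb R}$, together with the continuity of $s\mapsto W_a(e^{ist};x)$. The probabilistic picture is immediate. If $S_n$ is a binomial variable with parameters $n$ and $u$, then $B_n^{[-1]}(g(t);u)=Eg\!\left(\tfrac{S_n}{n}\right)$, so the left-hand operator in (\ref{e8.1}) is the one attached to the random variable
\begin{equation*}
Z_n^x:=x+\dfrac{nb_n}{\sqrt{u(1-u)}}\left(\dfrac{S_n}{n}-u\right),
\end{equation*}
while $W_a(\,\cdot\,;x)$ is attached to $Z^x\sim N(x,a)$. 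Since $EZ_n^x=x$ and $\operatorname{Var}Z_n^x=nb_n^2\to a$, the statement is essentially the de Moivre--Laplace central limit theorem, and the only thing left to do is to carry out the characteristic-function computation in analytic terms.

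First I would record $W_a(e^{ist};x)=\dfrac{1}{\sqrt{2\pi a}}\int_{\mathbb R}e^{ist}e^{-(t-x)^2/(2a)}\,dt=e^{isx-as^2/2}$, which is visibly continuous in $s$. Next, writing $\beta_n:=\dfrac{sb_n}{\sqrt{u(1-u)}}$ and noting that $\beta_n\to0$ (because $nb_n^2\to a$ forces $b_n\to0$), from $B_n^{[-1]}(e^{i\tau t};u)=(1-u+ue^{i\tau/n})^n$ one gets
\begin{equation*}
L_n(e^{ist};x):=B_n^{[-1]}\!\left(e^{is\left(x+\frac{nb_n}{\sqrt{u(1-u)}}(t-u)\right)};u\right)=e^{isx}\,e^{-inu\beta_n}\bigl(1-u+ue^{i\beta_n}\bigr)^{n},
\end{equation*}
the factor $e^{-inu\beta_n}$ being exactly the contribution of the $-u$ in $t-u$, since $\dfrac{snb_n}{\sqrt{u(1-u)}}\cdot u=nu\beta_n$.

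The heart of the argument is then the Taylor expansion of the complex logarithm near $1$:
\begin{equation*}
\log\!\bigl(1-u+ue^{i\beta_n}\bigr)=iu\beta_n-\tfrac12\,u(1-u)\beta_n^2+O(\beta_n^3),\qquad \beta_n\to0,
\end{equation*}
which is legitimate for $n$ large since $1-u+ue^{i\beta_n}\to1$. Multiplying by $n$: the term $inu\beta_n$ cancels against $e^{-inu\beta_n}$; the quadratic term becomes $-\tfrac12\,nu(1-u)\beta_n^2=-\tfrac12 s^2\,(nb_n^2)\to-\tfrac12 as^2$; and the remainder is $O(n\beta_n^3)$, which tends to $0$ because $n\beta_n^3$ is a fixed multiple of $(nb_n^2)b_n$. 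Hence $L_n(e^{ist};x)\to e^{isx}e^{-as^2/2}=W_a(e^{ist};x)$, and Theorem \ref{T2.1} (used with $I={\mathbb R}$, which is the natural domain here because the rescaled argument $x+\frac{nb_n}{\sqrt{u(1-u)}}(t-u)$ sweeps out all of ${\mathbb R}$) delivers (\ref{e8.1}) for every $f\in C_b({\mathbb R})$. The one delicate point is the bookkeeping of the $O(\beta_n^3)$ error in the complex expansion and the justification of the principal branch of the logarithm; everything else is routine.
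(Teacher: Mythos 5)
Your proof is correct and follows essentially the same route as the paper: reduce to $f(t)=e^{ist}$ via Theorem \ref{T2.1}, identify $L_n(e^{ist};x)$ with the binomial characteristic function $e^{isx}\bigl[(1-u)e^{-iu\beta_n}+ue^{i(1-u)\beta_n}\bigr]^n$, and pass to the limit $e^{isx-as^2/2}=W_a(e^{ist};x)$. Your handling of the limit via the Taylor expansion of $\log\bigl(1-u+ue^{i\beta_n}\bigr)$ is in fact more careful than the paper's, which writes the exponent informally (with a typographical omission of the ``$-1$'' in the numerator) and simply asserts that the limit ``readily follows''.
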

\begin{proof}
	Let \begin{equation}\label{e8.2}L_n\left(f(t);x\right):=B_n^{[-1]}\left(f\left(x+\dfrac{nb_n}{\sqrt{u(1-u)}}(t-u)\right);u\right).\end{equation}
	Then, by a straightforward calculation we get
	\begin{align*}
		L_n\left(e^{ist};x\right)&:=B_n^{[-1]}\left(e^{ is\left(x+\frac{nb_n}{\sqrt{u(1-u)}}(t-u)\right)};u\right)\\
		&=e^{isx}\sum_{j=0}^n{n\choose j}u^j(1-u)^{n-j}e^{is\frac{nb_n}{\sqrt{u(1-u)}}\left(\frac{j}{n}-u\right)}\\
		&=e^{ist}\left[(1-u)e^{\frac{-isub_n}{\sqrt{u(1-u)}}}+ue^{\frac{is(1-u)b_n}{\sqrt{u(1-u)}}}\right]n.
	\end{align*}
Remark that
$$ \displaystyle\lim_{n\to \infty} L_n\left(e^{ist};x\right)=e^{isx}e^{\lim_{n\to\infty}\left(\frac{(1-u)e^{\frac{-isub_n}{\sqrt{u(1-u)}}}+ue^{\frac{is(1-u)b_n}{\sqrt{u(1-u)}}}}{b_n^2}nb_n^2\right)}.  $$
Since $nb_n^2\to a$ we have $b_n\to 0$ and it readily follows that
\begin{equation}\label{e8.3}
	\displaystyle\lim_{n\to\infty} L_n(e^{ist};x)=e^{isx-\frac{1}{2}as^2}.
\end{equation}
On the other hand, using the well-known characteristic function of a Gaussian random variable with mean $x$ and variance $a$, we can write
\begin{equation}
	\label{e8.4} 
	W_a(e^{ist};x)=\dfrac{1}{\sqrt{2\pi a}}\int_{{\mathbb R}} e^{ist} e^{-\frac{(t-x)^2}{2a}}dt=e^{isx-\frac{1}{2}as^2}.
\end{equation}
From (\ref{e8.3}), (\ref{e8.4}) and Theorem \ref{T2.1} we infer that
\begin{equation}
	\label{e8.5} \displaystyle\lim_{n\to\infty} L_n(f(t);x)=W_a(f(t);x).
\end{equation}
Now (\ref{e8.1}) is a consequence of (\ref{e8.2}) and (\ref{e8.5}).
\end{proof}
We consider again a sequence $b_n>0$, $n\geq 1$, such that $\displaystyle\lim_{n\to\infty}nb_n^2=a>0$. The following Bernstein-Schnabl type operators were introduced in \cite{Rasa_Bari} and investigated in \cite{Ivan1}, \cite{Ivan2}, \cite{Vladislav}, \cite{Popa_Rasa}, \cite{AGR}, \cite{RACSAM}:
$$ S_{n,b_n}\left(f(t);x\right):=\dfrac{1}{(2nb_n)^n}\int_{x-nb_n}^{x+nb_n}\cdots \int_{x-nb_n}^{x+nb_n}f\left(\dfrac{u_1+\dots+u_n}{n}\right)du_1\cdots du_n.  $$
\begin{theorem}
	Let $ f\in C_b[0,\infty)$ and $x\geq 0$. Then
	\begin{equation}
		\label{e8.6}
		\displaystyle\lim_{n\to\infty} S_{n,b_n}(f(t);x)=W_{a/3}(f(t);x).
	\end{equation}
\end{theorem}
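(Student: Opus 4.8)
The plan is to apply Theorem \ref{T2.1} with $I=[0,\infty)$, taking $L_n:=S_{n,b_n}$ and $L:=W_{a/3}$; then (\ref{e8.6}) will follow once we check that $\displaystyle\lim_{n\to\infty}S_{n,b_n}(e^{ist};x)=W_{a/3}(e^{ist};x)$ for every $s\in{\mathbb R}$ and every $x\geq 0$, and that $s\mapsto W_{a/3}(e^{ist};x)$ is continuous. The latter is immediate from (\ref{e8.4}) applied with $a$ replaced by $a/3$, namely $W_{a/3}(e^{ist};x)=e^{isx-\frac{a}{6}s^2}$. The random variables underlying the two operators are $Z_n^x=\frac{1}{n}(U_1+\cdots+U_n)$, where $U_1,\dots,U_n$ are independent and uniformly distributed on $[x-nb_n,x+nb_n]$, and $Z^x$ is Gaussian with mean $x$ and variance $a/3$; this already explains the constant $1/3$, since a uniform variable on an interval of half-length $h$ has variance $h^2/3$, so $Z_n^x$ has variance $\frac{1}{n^2}\cdot n\cdot\frac{(nb_n)^2}{3}=\frac{nb_n^2}{3}\to\frac{a}{3}$.

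First I would compute $S_{n,b_n}(e^{ist};x)$ in closed form. Since $e^{is(u_1+\cdots+u_n)/n}$ is a product of the factors $e^{isu_j/n}$ and the region of integration is a cube with equal edges, the $n$-fold integral factorizes:
\begin{equation*}
S_{n,b_n}(e^{ist};x)=\left(\frac{1}{2nb_n}\int_{x-nb_n}^{x+nb_n}e^{isu/n}\,du\right)^{n}.
\end{equation*}
Evaluating the one-dimensional integral (substitute $u=x+v$) gives $e^{isx/n}\,\dfrac{\sin(sb_n)}{sb_n}$, so that
\begin{equation*}
S_{n,b_n}(e^{ist};x)=e^{isx}\left(\frac{\sin(sb_n)}{sb_n}\right)^{n}.
\end{equation*}

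Next I would pass to the limit. The hypothesis $nb_n^2\to a$ forces $b_n\to 0$, so for fixed $s$ we may insert the expansion $\log\frac{\sin u}{u}=-\frac{u^2}{6}+O(u^4)$ as $u\to 0$, with $u=sb_n$, to obtain
\begin{equation*}
n\log\frac{\sin(sb_n)}{sb_n}=-\frac{s^2}{6}\,nb_n^2+O(nb_n^4)\longrightarrow-\frac{a}{6}s^2,
\end{equation*}
where we used $nb_n^4=(nb_n^2)b_n^2\to 0$. Hence $\displaystyle\lim_{n\to\infty}S_{n,b_n}(e^{ist};x)=e^{isx-\frac{a}{6}s^2}=W_{a/3}(e^{ist};x)$, and Theorem \ref{T2.1} yields (\ref{e8.6}).

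The argument contains no serious obstacle; the main points requiring care are the elementary asymptotics of $\bigl(\sin(sb_n)/(sb_n)\bigr)^{n}$ — in particular, controlling the remainder as $O(nb_n^4)$ so that it is negligible in the limit — and identifying, as above, the constant $1/3$ with the variance of the uniform distribution. One should also keep the same convention as in Section \ref{s11} for $W_a$, extending $f\in C_b[0,\infty)$ appropriately so that both $S_{n,b_n}$ (whose integration range $[x-nb_n,x+nb_n]$ may protrude from $[0,\infty)$) and $W_{a/3}$ are well defined; with this understood, the standing continuity hypotheses of Theorem \ref{T2.1} hold and the proof is complete.
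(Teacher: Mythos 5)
Your proposal is correct and follows essentially the same route as the paper: reduce to the exponentials via Theorem \ref{T2.1}, factor the $n$-fold integral to get $S_{n,b_n}(e^{ist};x)=e^{isx}\bigl(\sin(sb_n)/(sb_n)\bigr)^{n}$, and use the elementary asymptotics (the paper writes the limit via $n(\sin sb_n-sb_n)/(sb_n)$ rather than the logarithm, but it is the same computation) to obtain $e^{isx-\frac{1}{6}as^2}=W_{a/3}(e^{ist};x)$. The probabilistic identification of $a/3$ as the limiting variance of an average of uniforms is a pleasant explanatory aside, not a different argument.
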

\begin{proof}
	Using Theorem \ref{T2.1} it suffices to show that
	\begin{equation}\label{e8.7}
		\lim_{n\to\infty}S_{n,b_n}(e^{ist};x)=W_{a/3}(e^{ist};x).
	\end{equation}
According to the definition we have
\begin{align}
	S_{n,b_n}(e^{ist};x)&=\dfrac{1}{(2nb_n)^n}\int_{x-nb_n}^{x+nb_n}\cdots\int_{x-nb_n}^{x+nb_n}e^{is(u_1+\dots+u_n)/n}du_1\dots du_n\\
	&=\dfrac{1}{(2nb_n)^n}\left(\int_{x-nb_n}^{x+nb_n}e^{isu/n} du\right)^n=e^{isx}\left(\dfrac{\sin sb_n}{sb_n}\right)^n.
\end{align}

Since $nb_n^2\to a$ we have $b_n\to 0$ and therefore
\begin{align*}
	\displaystyle\lim_{n\to\infty} S_{n,b_n}(e^{ist};x)&=e^{isx}\exp\left({\lim_{n\to\infty}\left(n\dfrac{\sin sb_n-sb_n}{sb_n}\right)}\right)\\
	&=e^{isx}\exp\left(s^2(nb_n^2)\dfrac{\sin sb_n-sb_n}{(sb_n)^3}\right).
\end{align*}
It follows that 
\begin{equation}
	\label{e8.8} \displaystyle\lim_{n\to\infty}S_{n,b_n}(e^{ist};x)=e^{isx-\frac{1}{6}as^2}.
\end{equation}
Now (\ref{e8.7}) is a consequence of (\ref{e8.4}) and (\ref{e8.8}). Thus (\ref{e8.6}) is proved.
	
\end{proof}

\section{The limit operator}\label{s12}

\begin{prop}
	Let $(L_n)_{n\geq 1}$ and $(T_m)_{m\geq 1}$ be two sequences of positive linear operators such that
	$$  \displaystyle \lim_{n\to\infty} L_{mn}(f(nt);\frac{x}{n})=T_m(f(t);x)$$
	for all $m\geq 1$, $x\geq 0$, $f\in C_b[0,\infty)$. Then
	\begin{equation}
		\label{e9.1} T_{m\nu}\left(f(\nu t);\frac{x}{\nu}\right)=T_m\left(f(t);x\right),
	\end{equation}
for all $m\geq 1$, $\nu\geq 1$, $x\geq 0$, $f\in C_b[0,\infty)$.
\end{prop}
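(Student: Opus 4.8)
The plan is to exploit the fact that the defining limit for $T_m$ is taken over $n\to\infty$, so that replacing the index by any subsequence leaves the limit unchanged; the whole statement then follows by invoking the hypothesis twice with suitably relabelled data. Fix $m\geq 1$, $\nu\geq 1$, $x\geq 0$ and $f\in C_b[0,\infty)$. First I would record the admissibility of the data I intend to feed into the hypothesis: the function $g:=f(\nu\,\cdot\,)$, i.e.\ $g(t)=f(\nu t)$, again lies in $C_b[0,\infty)$, since $t\mapsto \nu t$ is a self-map of $[0,\infty)$ and composition with it preserves continuity and boundedness; and $x/\nu\geq 0$. Hence the hypothesis may legitimately be applied to the triple $(m\nu,\,g,\,x/\nu)$.

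Applying the hypothesis with $m$ replaced by $m\nu$, with $f$ replaced by $g$, and with $x$ replaced by $x/\nu$, I obtain
$$
T_{m\nu}\!\left(f(\nu t);\frac{x}{\nu}\right)=T_{m\nu}\!\left(g(t);\frac{x}{\nu}\right)=\lim_{n\to\infty}L_{m\nu n}\!\left(g(nt);\frac{x}{\nu n}\right)=\lim_{n\to\infty}L_{m\nu n}\!\left(f(\nu n t);\frac{x}{\nu n}\right).
$$
Next I would apply the hypothesis in its original form, with the triple $(m,f,x)$, to get
$$
T_m\!\left(f(t);x\right)=\lim_{N\to\infty}L_{mN}\!\left(f(Nt);\frac{x}{N}\right).
$$
Since this limit exists as $N\to\infty$, it agrees with the limit of the subsequence obtained by putting $N=\nu n$ and letting $n\to\infty$; that is,
$$
T_m\!\left(f(t);x\right)=\lim_{n\to\infty}L_{m\nu n}\!\left(f(\nu n t);\frac{x}{\nu n}\right).
$$
Comparing the two displayed limits yields $T_{m\nu}(f(\nu t);x/\nu)=T_m(f(t);x)$, which is (\ref{e9.1}).

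I do not expect any genuine obstacle here: nothing about the operators $L_n$ is needed beyond the assumed convergence, and the only points requiring care are the bookkeeping of which variable plays which role when the hypothesis is invoked twice, and the elementary remark that a convergent sequence and each of its subsequences share the same limit. If one prefers a slogan: by hypothesis $(m,x)\mapsto T_m(f(t);x)$ is the ``diagonal'' limit of $L_{mn}(f(nt);x/n)$, and relabelling the product index $mn$ shows that this diagonal is invariant under the substitution $m\mapsto m\nu$, $t\mapsto \nu t$, $x\mapsto x/\nu$.
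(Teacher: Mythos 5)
Your proof is correct and takes essentially the same route as the paper's: both apply the hypothesis twice, once to the relabelled data $(m\nu,\,f(\nu\,\cdot\,),\,x/\nu)$ and once to $(m,f,x)$ along the subsequence $N=\nu n$, and then observe that the two resulting expressions $L_{m(\nu n)}\bigl(f((\nu n)t);\tfrac{x}{\nu n}\bigr)$ and $L_{(m\nu)n}\bigl(f(n(\nu t));\tfrac{x/\nu}{n}\bigr)$ are literally identical. The paper states the same two displayed limits more tersely; your added remarks (that $f(\nu\,\cdot\,)\in C_b[0,\infty)$ and that a convergent sequence shares its limit with every subsequence) only make explicit what the paper leaves implicit.
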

\begin{proof} Let $\nu\geq 1$, $m\geq 0$. Then
	\begin{align*}
		&\displaystyle\lim_{n\to \infty}L_{m(n\nu)}\left(f((n\nu)t);\dfrac{x}{n\nu}\right)=T_m(f(t);x),\\
		&\displaystyle\lim_{n\to\infty}L_{(m\nu)n}\left(f(n(\nu t));\frac{x/\nu}{n}\right)=T_{m\nu}\left(f(\nu t);\frac{x}{\nu}\right)
	\end{align*}
and so we get (\ref{e9.1}).
\end{proof}

\begin{theorem}
	Let $0\leq a_0<a_1<\cdots$, $\varphi_{m,k}\in C_b[0,\infty)$, $m\in{\mathbb N}$, $k\in{\mathbb N}_0$,
	$$ T_m\left(f(t);x\right):=\displaystyle\sum_{k=0}^{\infty}\varphi_{m,k}(x)f\left(\frac{a_k}{m}\right),\,\,f\in C_b[0,\infty),\,\, x\geq 0.  $$
	Then (\ref{e9.1}) is equivalent to
	\begin{equation}
		\label{e9.2p} \varphi_{m\nu,k}(x)=\varphi_{m,k}(\nu x),\,\, m,\nu\in{\mathbb N}, \,\, k\in{\mathbb N}_0,\,\, x\geq 0,
	\end{equation}
and to
\begin{equation}
	\label{e9.3} \varphi_{m,k}(x)=\varphi_{1,k}(mx),\,\,m\in{\mathbb N},\,\, k\in{\mathbb N}_0,\,\, x\geq 0.
\end{equation}
\end{theorem}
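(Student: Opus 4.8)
The plan is to establish the chain of equivalences $(\ref{e9.1})\Leftrightarrow(\ref{e9.2p})\Leftrightarrow(\ref{e9.3})$, the first equivalence being the substantive one and the second purely algebraic.

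First I would rewrite both sides of $(\ref{e9.1})$ through the coefficient functions. Since
\[
T_{m\nu}\!\left(f(\nu t);\frac{x}{\nu}\right)=\sum_{k\ge 0}\varphi_{m\nu,k}\!\left(\frac{x}{\nu}\right) f\!\left(\frac{\nu a_k}{m\nu}\right)=\sum_{k\ge 0}\varphi_{m\nu,k}\!\left(\frac{x}{\nu}\right) f\!\left(\frac{a_k}{m}\right),
\]
while $T_m(f(t);x)=\sum_{k\ge 0}\varphi_{m,k}(x)f(a_k/m)$, relation $(\ref{e9.1})$ is equivalent to
\[
\sum_{k\ge 0}\bigl(\varphi_{m\nu,k}(x/\nu)-\varphi_{m,k}(x)\bigr)\,f\!\left(\frac{a_k}{m}\right)=0\qquad\text{for all }f\in C_b[0,\infty),\ m,\nu\in\mathbb N,\ x\ge 0 .
\]

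Next I would show that the bracketed coefficients must vanish termwise. Since $T_m$ is a positive linear operator, each $\varphi_{m,k}$ is nonnegative and $\sum_{k\ge 0}\varphi_{m,k}(x)=T_m(\mathbf 1;x)<\infty$, and similarly for $T_{m\nu}$; hence $\mu_x:=\sum_k\varphi_{m\nu,k}(x/\nu)\,\delta_{a_k/m}$ and $\nu_x:=\sum_k\varphi_{m,k}(x)\,\delta_{a_k/m}$ are finite Borel measures on $[0,\infty)$, and the displayed identity states that $\int f\,d\mu_x=\int f\,d\nu_x$ for every $f\in C_b[0,\infty)$, so $\mu_x=\nu_x$. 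Because the points $a_k/m$ are pairwise distinct (the sequence $a_0<a_1<\cdots$ is strictly increasing), equating the mass at $a_k/m$ gives $\varphi_{m\nu,k}(x/\nu)=\varphi_{m,k}(x)$ for all $k\in\mathbb N_0$ and all $x\ge 0$; replacing $x$ by $\nu x$ then yields $(\ref{e9.2p})$. (One may avoid measure theory altogether: for fixed $k_0$ the point $a_{k_0}/m$ is isolated in $\{a_k/m:k\ge 0\}$, its nearest neighbours being $a_{k_0\pm 1}/m$, so there is a cut-off $\chi_{k_0}\in C_b[0,\infty)$ with $\chi_{k_0}(a_k/m)=\delta_{k,k_0}$; inserting $f=\chi_{k_0}$ in the display gives the same conclusion.) Conversely, substituting $(\ref{e9.2p})$ back into the two expansions gives $\varphi_{m\nu,k}(x/\nu)=\varphi_{m,k}(\nu\cdot x/\nu)=\varphi_{m,k}(x)$, hence $(\ref{e9.1})$.

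Finally, for $(\ref{e9.2p})\Leftrightarrow(\ref{e9.3})$: setting $m=1$ in $(\ref{e9.2p})$ and renaming $\nu$ as $m$ gives $\varphi_{m,k}(x)=\varphi_{1,k}(mx)$, which is $(\ref{e9.3})$; conversely, assuming $(\ref{e9.3})$, both $\varphi_{m\nu,k}(x)$ and $\varphi_{m,k}(\nu x)$ equal $\varphi_{1,k}(m\nu x)$, which is $(\ref{e9.2p})$. I expect the only genuinely delicate point to be the separation step — knowing that a finite Borel measure on $[0,\infty)$ is determined by its integrals against $C_b[0,\infty)$, equivalently that the discrete set $\{a_k/m\}$ carries the cut-off functions $\chi_{k_0}$ — everything else being routine bookkeeping.
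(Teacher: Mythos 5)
Your proof is correct and follows essentially the same route as the paper: expand $T_{m\nu}\left(f(\nu t);\frac{x}{\nu}\right)=\sum_k\varphi_{m\nu,k}\left(\frac{x}{\nu}\right)f\left(\frac{a_k}{m}\right)$, identify coefficients to get the equivalence with (\ref{e9.2p}), and then pass between (\ref{e9.2p}) and (\ref{e9.3}) by the same specialization $m=1$ and recomposition. The only difference is that you carefully justify the termwise identification (via point masses or cut-off functions at the isolated points $a_k/m$), a step the paper compresses into ``it readily follows.''
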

\begin{proof}
	$T_{m\nu}\left(f(\nu t);\dfrac{x}{\nu}\right)=\sum_{k=0}^{\infty}\varphi_{m\nu,k}\left(\frac{x}{\nu}\right)f\left(\dfrac{a_k}{m}\right),$
	and it readily follows that (\ref{e9.1}) is equivalent to (\ref{e9.2p}). Suppose that (\ref{e9.3}) holds true. Then
	$$
\varphi_{m\nu,k}(x)=\varphi_{1,k}(m\nu x) \textrm{ and } \varphi_{m,k}(\nu x)=\varphi_{1,k}(m\nu x).$$
This implies (\ref{e9.2p}).

Finally, suppose that (\ref{e9.2p}) is valid. Using it with $m=1$ it follows that $\varphi_{\nu,k}(x)=\varphi_{1,k}(\nu x)$ for all $\nu \geq 1$, $k\in {\mathbb N}_0$, $x\geq 0$, and this (\ref{e9.3}).
\end{proof}

The Jakimovski-Leviatan operators are generalization of Sz\'asz-Mirakjan operators (see \cite{Leviatan1}, \cite{Leviatan2}). To define them we need a sequence of Appell polynomials $(A_n(x))_{n\geq 0}$. If they have the explicit representation

\begin{equation}\label{e2.1}
	A_n(x)=\alpha_n+{n\choose 1}\alpha_{n-1} x+{n\choose 2}\alpha_{n-2} x^2+\cdots +\alpha_0 x^n,\,\, n=0,1,\dots,
\end{equation}
one associates  the power series
\begin{equation}\label{e2.2}
	a(t)=\alpha_0+\dfrac{t}{1!}\alpha_1+\dfrac{t^2}{2!}\alpha_{2}+\cdots+\dfrac{t^n}{n!}\alpha_n+\cdots,\,\,\alpha_0\ne 0,
\end{equation}
called the generating function.

In fact, the sequence and its generating function are related by
\begin{equation}
	\label{e2.3}
	a(t)e^{tx}=A_0(x)+\dfrac{t}{1!}A_1(x)+\dfrac{t^2}{2!}A_2(x)+\cdots+\dfrac{t^n}{n!}A_n(x)+\cdots
\end{equation}
For details see, e.g.,
\cite{2}.

The Jakimovski-Leviatan type operators associated with $(A_n(x))_{n\geq 0}$ are defined by
\begin{equation}\label{e3.2} \Psi_n f(x):=\dfrac{e^{-nx}}{a(1)}\displaystyle\sum_{k=0}^{\infty}\dfrac{1}{k!}A_k(nx)f\left(\dfrac{k}{n}\right),\,\, n\geq 1, \end{equation}
where $f\in C[0,\infty)$ is a function for which the series is convergent for all $x\in [0,\infty)$.

\begin{remark}
	The Sz\'asz-Mirakyan and more generally the Jakimovski-Leviatan operators satisfy the equivalent conditions (\ref{e9.1}), (\ref{e9.2p}), (\ref{e9.3}). The Sz\'asz-Mirakyan operator appeared as limit operator in previous examples. The next example presents an instance where a Jakimovski-Leviatan operator appears as limit operator.
\end{remark}
\begin{exam}
	  Let $p\in [0,\infty)$ and $a(t):=e^{pt}$, $t\in{\mathbb R}$. Using (\ref{e2.3}) we find the associated Appell polynomials $A_{k,p}(x)=(x+p)^k,\, k\geq 0$. The Jakimovski-Leviatan operators are
	$$ \Psi_{n,p}(f(t);x)=e^{-nx-p}\displaystyle\sum_{k=0}^{\infty}\dfrac{(nx+p)^k}{k!}f\left(\dfrac{k}{n}\right),\,\, f\in C_b[0,\infty).  $$
	Let us remark that $\Psi_{n,px}\left(f(t);x\right)$ are the Sz\'asz-Mirakyan-Schurer operators presented, e.g. in \cite[p.338]{FA-MC}.
	
	Let $B_n^{[1]}$ be the classical Baskakov operators, and 
	$$L_n\left(f(t);x\right):=B_{mn}^{[1]}\left(f(nt);\dfrac{x+p/m}{n}\right).  $$
	Then $\displaystyle\lim_{n\to\infty} L_n(e^{ist};x)=e^{(mx+p)\left(e^{is/m}-1\right)}=\Psi_{m,p}(e^{ist};x)$ and consequently
	\begin{equation}\label{e.Psi}\displaystyle\lim_{n\to\infty}B_{mn}^{[1]}\left(f(nt);\dfrac{x+p/m}{n}\right)=\Psi_{m,p}(f(t);x). \end{equation}
\end{exam}

\begin{exam}
	Consider the operator
	$$ L_m^*\left(f(t);x\right):=\dfrac{1}{\cosh(mx)}\displaystyle\sum_{k=0}^{\infty} \dfrac{(mx)^{2k}}{(2k)!}f\left(\dfrac{2k}{m}\right),\,\, f\in C_b[0,\infty),\,\, x\geq 0, $$
	introduced by  Le\'sniewicz and Rempulska \cite{LR}; see also \cite{Ciupa}. It satisfies the equivalent conditions (\ref{e9.1}), (\ref{e9.2p}), (\ref{e9.3}).
	
	Now let $\tilde{B}_n\left(f(t);x\right):=\displaystyle\sum_{k=0}^n{n\choose k}(-x)^k(1-x)^{n-k}f\left(\dfrac{k}{n}\right),\,\, f\in C[0,1],\,\, x\in[0,1]$, and
	$$ L_n\left(f(t);x\right):=\dfrac{B_n^{[-1]}(f(t);x)+\tilde{B}_n(f(t);x)}{1+(1-2x)^n}, $$ 
	where
	$$ L_{2p+1}\left(f(t);1\right):=\displaystyle\lim_{x\to 1} L_{2p+1}\left(f(t);x\right)=f\left(\dfrac{2p}{2p+1}\right). $$
	Then $L_n$ reproduces the constant functions and 
	$$ \displaystyle\lim_{n\to\infty} L_{mn}\left(e^{isnt};\dfrac{x}{n}\right) =\dfrac{\cosh (mxe^{is/m})}{\cosh(mx)}=L_m^*\left(e^{ist};x\right). $$
	Consequently,
	\begin{equation}\label{e.Lm} \displaystyle\lim_{n\to\infty} L_{mn}\left(f(nt);\dfrac{x}{n}\right)=L_m^*\left(f(t);x\right), f\in C_b[0,\infty).  \end{equation}
\end{exam}

\begin{prop}
	Let $(L_m)_{m\geq 1}$ and $(Z_m)_{m\geq 1}$ be two sequences of positive linear operators such that
	$$\displaystyle\lim_{n\to\infty} L_m\left(f\left(\frac{t}{n}\right);nx\right)=Z_m(f(t),x)  $$
	for all $m\geq 1$, $x\geq 0$, $f\in C_b[0,\infty)$. Then
	\begin{equation}
		\label{e9.4} Z_m\left(f\left(\frac{t}{\nu}\right);\nu x\right)=Z_m\left(f(t),x\right),\,\, \nu\in{\mathbb N},\,\, x\geq 0.
	\end{equation}
\end{prop}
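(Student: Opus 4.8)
The plan is to unravel the definition of $Z_m$ as a limit and to perform the substitution $t\mapsto t/\nu$, $x\mapsto \nu x$ directly inside it, in complete analogy with the proof of the first Proposition of this section (the one producing (\ref{e9.1})). There is no deep analytic content here beyond Theorem \ref{T2.1} having already been used to guarantee that the limits defining $Z_m$ exist; the whole argument is bookkeeping of the two scalings.

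First I would fix $m\geq 1$, $\nu\in\mathbb{N}$, $x\geq 0$ and $f\in C_b[0,\infty)$, and note that the function $t\mapsto f(t/\nu)$ again lies in $C_b[0,\infty)$ (since $t\mapsto t/\nu$ maps $[0,\infty)$ into itself), so the hypothesis applies to it. Applying the defining relation for $Z_m$ to the function $g(t):=f(t/\nu)$ at the point $\nu x$ gives
$$Z_m\left(f\left(\frac{t}{\nu}\right);\nu x\right)=\lim_{n\to\infty}L_m\left(f\left(\frac{t}{n\nu}\right);n\nu x\right).$$
Next I would observe that the right-hand side is the limit of the subsequence indexed by the multiples of $\nu$ of the sequence $\bigl(L_m(f(t/N);Nx)\bigr)_{N\geq 1}$, which by hypothesis converges to $Z_m(f(t);x)$. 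A subsequence of a convergent sequence has the same limit, so the displayed limit along $N=n\nu$, $n\to\infty$, equals $Z_m(f(t);x)$, which is exactly (\ref{e9.4}).

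The only step requiring any care is keeping the two scalings straight — the internal scaling $t\mapsto t/n$ built into the definition of $Z_m$ and the additional scaling $t\mapsto t/\nu$ coming from the function $f(t/\nu)$ — and then recognizing that these compose to the single scaling by $n\nu$, so that passing to the subsequence $N=n\nu$ does not alter the limit. I do not expect a genuine obstacle; this proposition is the multiplicative-semigroup-type consistency property of the limit operator $Z_m$, mirroring (\ref{e9.1}) for $T_m$, and (as there) one could subsequently combine it with a structural ansatz for $Z_m$ to obtain a characterization of the form $Z_m(f(t);x)=Z_m(f(1);\cdot)$-type invariance.
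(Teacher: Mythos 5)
Your argument is correct and coincides with the paper's own proof: both apply the defining limit to the function $t\mapsto f(t/\nu)$ at the point $\nu x$, recognize the resulting expression as the subsequence $N=n\nu$ of $L_m\left(f\left(\frac{t}{N}\right);Nx\right)$, and conclude by uniqueness of limits along subsequences. Your write-up merely makes the subsequence step more explicit than the paper does.
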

\begin{proof}We have
	$$ \displaystyle\lim_{n\to\infty}L_m\left(f\left(\dfrac{t/\nu}{n}\right);n(\nu x)\right)=Z_m\left(f\left(\dfrac{t}{\nu}\right);\nu x\right), $$
	and 
	$$ \displaystyle\lim_{n\to\infty}L_m\left(f\left(\dfrac{t}{n\nu}\right);(n\nu) x\right)=Z_m\left(f\left(t\right); x\right). $$
	Then leads to (\ref{e9.4}).
	\end{proof}
\begin{theorem}
	\label{T9.2} Let $Z_m$ be an integral operator of the form 
	$$ Z_m\left(f(t);x\right):=\displaystyle\int_0^{\infty}f(t)K_m(t,x)dt,\,\, f\in C_b[0,\infty),\,\, x\geq 0, $$
	with a suitable continuous kernel $K_m(t,x)$. Then (\ref{e9.4}) is equivalent to 
	\begin{equation}
		\label{e9.5} K_m(\nu t,\nu x)=\dfrac{1}{\nu}K_m(t,x),\,\, \nu>0,\,\, t,x\geq 0.
	\end{equation}
\end{theorem}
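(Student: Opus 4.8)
The plan is to reduce both statements to a single pointwise identity for the kernel by performing the change of variable $t=\nu u$ in the integral defining $Z_m$, and then to use that a continuous kernel is determined by its action against the test functions $f\in C_b[0,\infty)$.

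First I would record the computation underlying both implications. For $\nu>0$ and $f\in C_b[0,\infty)$, the substitution $t=\nu u$ gives
\[
Z_m\!\left(f\!\left(\tfrac{t}{\nu}\right);\nu x\right)=\int_0^{\infty}f\!\left(\tfrac{t}{\nu}\right)K_m(t,\nu x)\,dt=\int_0^{\infty}f(u)\,\nu K_m(\nu u,\nu x)\,du .
\]
Comparing with $Z_m(f(t);x)=\int_0^{\infty}f(u)K_m(u,x)\,du$, one sees that (\ref{e9.4}) is equivalent to
\[
\int_0^{\infty}f(u)\bigl(\nu K_m(\nu u,\nu x)-K_m(u,x)\bigr)\,du=0\qquad\text{for all }f\in C_b[0,\infty).
\]

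For the direction (\ref{e9.5})$\Rightarrow$(\ref{e9.4}) this is immediate: (\ref{e9.5}) says precisely $\nu K_m(\nu u,\nu x)=K_m(u,x)$, so the integrand vanishes, and this already covers the instances $\nu\in{\mathbb N}$ required in (\ref{e9.4}). For the converse, I would invoke the standard fact (the du Bois-Reymond lemma) that a locally integrable function which integrates to zero against every continuous function of compact support on $[0,\infty)$ vanishes almost everywhere; since $u\mapsto\nu K_m(\nu u,\nu x)-K_m(u,x)$ is continuous by the hypothesis on $K_m$, it then vanishes identically. This proves (\ref{e9.5}) for every $\nu\in{\mathbb N}$ and all $t,x\ge0$.

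The only point needing a little extra care is upgrading (\ref{e9.5}) from $\nu\in{\mathbb N}$ to arbitrary $\nu>0$, as stated. Applying the integer identity with $t,x$ replaced by $t/n,x/n$ gives $K_m(t/n,x/n)=n\,K_m(t,x)$; combining this with the integer identity with multiplier $p$ yields $K_m(\tfrac{p}{n}t,\tfrac{p}{n}x)=\tfrac{n}{p}K_m(t,x)$, i.e. (\ref{e9.5}) for every positive rational $\nu=p/n$. Density of $\mathbb{Q}^{+}$ in $(0,\infty)$ together with the continuity of $K_m$ extends the identity to all real $\nu>0$. I expect this density/continuity step, together with the appeal to the du Bois-Reymond lemma, to be the only non-automatic part of the argument; everything else reduces to the single change of variable above.
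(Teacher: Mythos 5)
Your proof is correct and follows the same core route as the paper's: the substitution $t=\nu u$ converts (\ref{e9.4}) into the statement that $\nu K_m(\nu u,\nu x)-K_m(u,x)$ integrates to zero against every $f\in C_b[0,\infty)$, and continuity of the kernel (via the du Bois--Reymond lemma) then forces this difference to vanish identically. Where you go beyond the paper is in noticing the mismatch of quantifiers: (\ref{e9.4}) is hypothesized only for $\nu\in\mathbb{N}$, while (\ref{e9.5}) is asserted for all $\nu>0$; the paper's proof silently writes ``for all $\nu>0$'' at this point, whereas you supply the missing bridge --- combining the integer instance at $(t,x)$ with the integer instance at $(t/n,x/n)$ gives the identity for all positive rationals $p/n$, and continuity of $K_m$ extends it to all real $\nu>0$. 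This is a small but genuine gap in the published argument that your version closes, using no hypotheses beyond the stated continuity of the kernel.
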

\begin{proof}
	We have 
	\begin{align*}
		Z_m\left(f\left(\dfrac{t}{\nu}\right);\nu x\right)&=\displaystyle\int_0^\infty f\left(\frac{t}{\nu}\right)K_m(t,\nu x)dt\\
		&=\nu\int_0^{\infty}f(t)K_m(\nu t,\nu x) dt.
	\end{align*}
So (\ref{e9.4}) becomes 
$$\displaystyle\int_0^{\infty}f(t)K_m(\nu t,\nu x)dt=\dfrac{1}{\nu}\int_0^{\infty}f(t)K_m(t,x) dt, $$
for all $f\in C_b[0,\infty)$, $\nu>0$, $x>0$. This is equivalent to (\ref{e9.5}) and the proof is finished.
\end{proof}
\begin{exam}\label{e9.2} For the Gamma operator the kernel is
	$$ K_m(t,x):=\dfrac{x^{-m}}{(m-1)!}m^mt^{m-1}e^{-mt/x}, $$
	and it satisfies (\ref{e9.5}).
	\end{exam}

\begin{exam}
	Let $R_m$ be an operator of the form
	$$ R_m(f(t);x):=\displaystyle\sum_{k=0}^{\infty}\varphi_{m,k}(x)\frac{\int_0^{\infty}\varphi_{m,k}(t)f(t)dt}{\int_0^{\infty}\varphi_{m,k}(t)dt}.  $$
\end{exam}

If the functions $\varphi_{m,k}$ satisfy (\ref{e9.2p}), then the operators $R_m$ satisfy (\ref{e9.1}).

\begin{problem}\label{P9.1}
Is the converse true?
\end{problem}
\begin{comment} Suppose that $(R_m)_{m\geq 1}$ satisfies (\ref{e9.1}) for all $m\geq 1$, $\nu \geq 1$, $x\geq 0$, $ f\in C_b[0,\infty)$. With $m=1$ we get $R_{\nu}\left(f(\nu t);\dfrac{x}{\nu}\right)=R_1\left(f(t);x\right)$. With an obvious change of notation we have $R_m(f(s);y)=R_1\left(f\left(\dfrac{s}{\nu}\right);\nu y\right)$. Even better, 
\begin{equation}
	\label{e9.6} R_m\left(f(t);x\right)=R_1\left(f\left(\dfrac{t}{m}\right);mx\right), \textrm{ for all } m\geq 1, \,\, x\geq 0.
\end{equation}
Therefore each $R_m$, $m\geq 2$, is completely determined by $R_1$.

 A natural hypothesis is  $\displaystyle \int_0^{\infty}\varphi_{m,k}(x)dx=1/m$, for all $m\geq 1$, $k\geq 0$. Then (\ref{e9.6}) becomes
 
 \begin{equation}
 	\label{e9.7} \displaystyle\sum_{k=0}^{\infty}\varphi_{m,k}(x)\int_0^{\infty}\varphi_{m,k}(t)f(t)dt=\sum_{k=0}^{\infty}\varphi_{1,k}(mx)\int_0^{\infty}\varphi_{1,k}(mt)f(t) dt,
 \end{equation}
for all $m\geq 1$, $x\geq 0$, $f\in C_b[0,\infty)$.

In this context Problem \ref{P9.1} is reduced to: does (\ref{e9.7}) imply  (\ref{e9.3})?

Generally speaking, if a Durrmeyer operator is represented as
$$ \displaystyle\sum_{k=0}^{\infty} u_k(x)\int_0^{\infty}u_k(t)f(t)dt=\sum_{k=0}^{\infty}v_k(x)\int_0^1v_k(t)f(t)dt,  $$
for all $x\geq 0$ and  $f\in C_b[0,\infty)$, does it follow that $u_k=v_k$, $k\geq 0$?
\end{comment}

\begin{prop} The following two statements are equivalent

\begin{itemize}
	\item[(1)] $R_{m\nu}\left(f(\nu t);\dfrac{x}{\nu}\right)=R_m\left(f(t);x\right), \,\, \forall\,\, m,\nu\geq 1,\,\, x\geq 0,\,\, f\in C_b[0,\infty).$\\
	\item[(2)]$R_m\left( f(t); x\right)=R_1\left(f\left(\dfrac{t}{m}\right);mx\right),\,\,\forall \,\, m\geq 1,\,\, x\geq 0,\,\, f\in C_b[0,\infty)$.
\end{itemize}
\end{prop}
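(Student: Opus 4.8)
The plan is to establish each implication from the other by elementary substitutions in the functional identities, using only that $C_b[0,\infty)$ is stable under rescaling of the argument: if $f\in C_b[0,\infty)$ and $\lambda>0$, then $t\mapsto f(\lambda t)$ again lies in $C_b[0,\infty)$, so any identity assumed to hold for all $f\in C_b[0,\infty)$ may also be applied to such rescaled functions.

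To prove $(2)\Rightarrow(1)$, I would fix $m,\nu\geq 1$, $x\geq 0$ and $f\in C_b[0,\infty)$, and apply statement (2) with $m$ replaced by $m\nu$ to the function $g(t):=f(\nu t)$ at the point $x/\nu$. Simplifying the inner composition $g(t/(m\nu))=f(t/m)$ and the rescaled spatial argument $(m\nu)\cdot(x/\nu)=mx$, this gives $R_{m\nu}(f(\nu t);x/\nu)=R_1(f(t/m);mx)$. Statement (2) applied directly to $f$, $m$, $x$ says that the right-hand side equals $R_m(f(t);x)$; combining the two equalities yields (1).

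For $(1)\Rightarrow(2)$, I would first specialize (1) to $m=1$, obtaining $R_\nu(f(\nu t);x/\nu)=R_1(f(t);x)$ for all $\nu\geq 1$, $x\geq 0$, $f\in C_b[0,\infty)$. Replacing $f$ by the rescaled function $t\mapsto f(t/\nu)$ turns the argument $\nu t$ inside $R_\nu$ into $t$ and the right-hand side into $R_1(f(t/\nu);x)$, so that $R_\nu(f(t);x/\nu)=R_1(f(t/\nu);x)$. Replacing $x$ by $\nu x$ then gives $R_\nu(f(t);x)=R_1(f(t/\nu);\nu x)$, which is exactly (2) after renaming $\nu$ as $m$.

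I do not anticipate a genuine obstacle here: both directions amount to careful bookkeeping of variable substitutions. The only points deserving attention are tracking how the spatial variable $x$ is rescaled at each step --- always in lockstep with the substitution performed in $t$ --- and the (trivial) observation that every function to which the identities are applied still belongs to $C_b[0,\infty)$.
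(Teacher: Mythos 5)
Your proof is correct and follows essentially the same route as the paper's: both directions are handled by the same substitutions (setting $m=1$ and rescaling $f$ and $x$ for $(1)\Rightarrow(2)$; applying $(2)$ with $m$ replaced by $m\nu$ to $f(\nu t)$ at $x/\nu$ for $(2)\Rightarrow(1)$). Your version merely makes the paper's ``obvious change of notation'' more explicit.
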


\begin{proof} (1) $\Rightarrow $ (2). Take $m=1$. Then $R_{\nu}\left(f(\nu t); \dfrac{x}{\nu}\right)=R_1\left(f(t);x\right)$. Set $s=\nu t$, $y=\dfrac{x}{\nu}$.

Then $R_{\nu}\left(f(s);y\right)=R_1\left(f\left(\dfrac{s}{\nu}\right);\nu y\right)$, which is (2).

(2) $\Rightarrow $ (1). Using (2) we have \begin{align*}R_{m\nu}\left(f(\nu t);\dfrac{x}{\nu}\right)&=R_1\left(f\left(\dfrac{\nu t}{m\nu}\right);m\nu\dfrac{x}{\nu}\right)\\
	&=R_1\left(f\left(\dfrac{t}{m}\right);mx\right)=R_m\left(f(t);x\right),\end{align*} and (1) is proved.

\end{proof}

\begin{exam}
	We try to determine a convolution operator $U(f(t);x)=\displaystyle\int_{\mathbb R}f(t)\varphi(x-t) dt$ satisfying (\ref{e9.4}) for all $\nu\in(0,\infty)$.
	
	$$ U\left(f\left(\dfrac{t}{\nu}\right);\nu x\right)=\displaystyle\int_{\mathbb R}f\left(\dfrac{t}{\nu}\right)\varphi(\nu x-t) dt=\int_{\mathbb R}f(s)\varphi(\nu x-\nu s)\nu ds $$
	and so $ \varphi\left(\nu(x-s)\right)=\dfrac{1}{\nu}\varphi(x-s)$, i.e., $\varphi(\nu z)=\dfrac{1}{\nu}\varphi(z)$.
	
	Taking $ \varphi(z):=\dfrac{1}{\pi z}$ we see that $Uf$ is the convolution of $f(t)$ and $\dfrac{1}{\pi t}$, i.e., the Hilbert transform of $f$. But, of course, this is not a positive linear operator.
\end{exam}

We end this section with a list with the limit operators prezented  in the previous sections, together with an equation where they appear: $B_m^{[0]}$ (\ref{e***1}), $V_{m}^{[0](k)}$ (\ref{e4.4}), $D_{m,\rho}^{[0](k)}$ (\ref{e5.3}),\linebreak  $V_m^{[0](k)}\left((f\circ \sigma)(t);\dfrac{\psi(x)}{n}\right)$ (\ref{e.Maja}), $G_{\frac{m}{n}+k}$ (\ref{e.Gamma}), $W_a$ (\ref{e8.1}), $W_{a/3}$ (\ref{e8.6}), $\Psi_{m,p}$ (\ref{e.Psi}), $L_m^*$ (\ref{e.Lm}),\linebreak  $G_{m+1}$ (\ref{e.Gamma1}).

$  $

\noindent{\bf Funding.} 
 This work has been supported by the University of Wuppertal.

$  $

\noindent{\bf Conflicts of interest.} The authors declare no competing financial interests.

$  $

\noindent{\bf Availability of data and material.} No data were used to support this study.

$  $


\end{document}